\documentclass{amsproc}

\usepackage{amsfonts}
\usepackage{amssymb}
\usepackage{mathrsfs}
\usepackage{xcolor,graphicx}
\usepackage{amsmath}
\usepackage{url}
\usepackage{verbatim}
\usepackage{hyperref}
\usepackage{xy}
\input xy
\xyoption{all}

\setcounter{tocdepth}{2}

\DeclareMathOperator{\Tot}{Tot}
\DeclareMathOperator{\holim}{\varprojlim}

\newcommand{\qcoh}{\mathrm{QCoh}}

\usepackage{amsthm}
\usepackage{cleveref/cleveref}

\newcommand{\dlin}{\Delta^{\mathrm{inj, \leq }n}}
\newcommand{\dl}{\Delta}

\newcommand{\dli}{\Delta^{\mathrm{inj}}}
\newcommand{\dln}{\Delta^{\mathrm{ \leq }n}}

\renewcommand{\hom}{\mathrm{Hom}}

\newtheorem{lm}{lm}
\newtheorem{lemma}[lm]{Lemma}
\newtheorem{corollary}[lm]{Corollary}

\newtheorem{theorem}[lm]{Theorem}

\newtheorem{proposition}[lm]{Proposition}

\begin{document}

\setlength{\parskip}{.7mm}
\renewcommand{\rightrightarrows}{\begin{smallmatrix} \to \\
\to \end{smallmatrix} }
\newcommand{\triplearrows}{\begin{smallmatrix} \to \\ \to \\ 
\to \end{smallmatrix} }

\newcommand{\sh}{\mathbf{Sh}}
 
\renewcommand{\ltimes}{\stackrel{\mathbb{L}}{\otimes}}
\newcommand{\psh}{\mathbf{PSh}}
\theoremstyle{definition}
\newtheorem{definition}[lm]{Definition}
\newcommand{\cd}{\mathrm{cd}}
\newcommand{\OO}{\widetilde{\mathcal{O}}}
\newcommand{\A}{\mathbb{A}}
\newcommand{\F}{\mathcal{F}}
\renewcommand{\P}{\mathbb{P}}
\newcommand{\bl}{\bullet}
\newcommand{\Tmf}{\mathrm{Tmf}}
\newcommand{\TMF}{\mathrm{TMF}}
\renewcommand{\ell}{\mathrm{Ell}}
\newcommand{\tmf}{\mathrm{tmf}}

\theoremstyle{definition}
\newtheorem{remark}[lm]{Remark}
\newcommand{\rng}{\mathbf{Ring}}
\newcommand{\gpd}{\mathbf{Gpd}}
\newcommand{\ei}{\mathbb{E}_1}
\renewcommand{\A}{\mathcal{A}_*}

\newcommand{\qcoha}{\qcoh^{\mathrm{ab}}}
\newtheorem{example}[lm]{Example}

\newcommand{\noteV}[1]{{\color{blue} \bf #1}} 

\title{Fibers of partial totalizations of a pointed cosimplicial space}
\date{\today}
\author{Akhil Mathew}
\address{University of California, Berkeley CA}
\email{amathew@math.berkeley.edu}

\author{Vesna Stojanoska}
\address{Max Planck Institute for Mathematics, Bonn, Germany}
\email{vstojanoska@mpim-bonn.mpg.de}
\thanks{
The first author is partially supported by the NSF Graduate Research
Fellowship under grant DGE-110640.
The second author is partially
supported by NSF grant DMS-1307390}

\maketitle

\newcommand{\e}[1]{\mathbf{E}_{#1}}

\begin{abstract}
Let $X^\bullet$ be a cosimplicial object in a pointed $\infty$-category. We show that the fiber of
$\mathrm{Tot}_m(X^\bullet) \to \mathrm{Tot}_n(X^\bullet)$ depends only on the
pointed cosimplicial object $\Omega^k X^\bullet$ and
is in particular a $k$-fold loop object, where $k  = 2n - m+2$. The
approach is explicit obstruction theory with
quasicategories. We also discuss generalizations to other types of homotopy
limits and colimits.  
\end{abstract}

\section{Introduction}
Let $X^\bullet$ be a pointed cosimplicial space, that is, $X^\bullet$ is a
functor $\Delta \to \mathcal{S}_*$ from the simplex category $\Delta$ to
the $\infty$-category  $\mathcal{S}_*$ of pointed spaces.  The
\emph{totalization} $\Tot X^\bullet$ is defined to be the homotopy limit of
$X^\bullet$. 

To understand $\mathrm{Tot} X^\bullet$, it is convenient to filter the category
$\Delta$. Let $\Delta^{\leq n}$ be the subcategory of $\Delta$ spanned by the
nonempty totally ordered sets of cardinality $\leq n+1$. 
Then $\mathrm{Tot} X^\bullet$
is the homotopy inverse limit of the
tower 
\begin{equation} \label{tottower}  \dots \to
\Tot_n(X^\bullet) \to
\Tot_{n-1}(X^\bullet) \to \dots \to \Tot_1(X^\bullet) \to
\Tot_0(X^\bullet) =
X^0, \end{equation}
where $\Tot_n(X^\bullet)$ is the homotopy limit of the diagram
$X^\bullet |_{\Delta^{\leq n}}$. 

The tower \eqref{tottower} is extremely useful. 
For example, it leads to a homotopy spectral sequence \cite[Ch. X]{BousfieldKan} for computing the homotopy groups of
the totalization (at the specified basepoint). 
For simplicity, we assume all fundamental groups 
\emph{abelian} and all $\pi_1$-actions on higher homotopy groups trivial. 
Then the $E_2$-term of this spectral sequence can be determined
purely algebraically, if one knows the homotopy groups of $X^\bullet$. Namely,
for each $t$, one obtains a cosimplicial abelian group $\pi_t X^\bullet$, and
$E_2^{s,t}$ is the $s$th cohomology group of the associated complex. 
This spectral sequence is a basic tool of algebraic topology. 

It is classical that the fibers of the maps $\mathrm{Tot}_m(X^\bullet) \to
\mathrm{Tot}_{m-1}(X^\bullet)$
are naturally $m$-fold loop spaces. To be specific, let $M^{m}X^\bullet$ be the $m$th matching space of the cosimplicial space $X^\bullet$, that is,
 \[M^m X^\bullet = \lim_{{ [m+1]} \twoheadrightarrow {[k]}\atop k \leq m} X^k.\]
 We have a natural map $X^m \to M^{m-1} X^\bullet$ with fiber denoted $N^m X^\bullet$, and the fiber of
 $\mathrm{Tot}_m(X^\bullet) \to \mathrm{Tot}_{m-1}(X^\bullet)$ can be
 identified with the $m$-fold loop space of $N^m X^\bullet$. 
 In particular, it depends only on $\Omega^m X^\bullet$, as $ M^{m-1}X^\bullet $ is defined as a limit and therefore commutes with $\Omega^m$.

The main result of this note is that one can obtain a similar picture for the fibers of
$\mathrm{Tot}_m(X^\bullet) \to \mathrm{Tot}_{n}(X^\bullet) $ for $n \leq m$,
in a restricted range, and in any pointed $\infty$-category, as follows.

\begin{theorem} 
\label{maintheorem}
Let $n \leq m \leq 2n+1$ and let $\mathcal{C}$ be any pointed $\infty$-category. 
There is a limit-preserving functor $F_{n,m}\colon \mathrm{Fun}(\Delta, \mathcal{C}) \to \mathcal{C}$
such that there exists,  for any $X^\bullet \in \mathrm{Fun}(\Delta, \mathcal{C})$, a functorial fiber sequence
in $\mathcal{C}$,
\[  F_{n,m}( \Omega^r X^\bullet) \simeq \Omega^r F_{n,m}(X^\bullet) \to \mathrm{Tot}_m(X^\bullet) \to
\mathrm{Tot}_n(X^\bullet)  \]
where $r = 2n-m + 2$. 
\end{theorem}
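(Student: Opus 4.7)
The plan is to construct the functor $F_{n,m}$ by induction on $d = m - n$, simultaneously identifying $\Omega^r F_{n,m}(X^\bullet)$ with the fiber of $\Tot_m(X^\bullet) \to \Tot_n(X^\bullet)$. The starting point is the classical per-level analysis of the Bousfield--Kan tower recalled in the introduction: for each $k$ there is a natural fiber sequence
\[ \Omega^k N^k X^\bullet \to \Tot_k(X^\bullet) \to \Tot_{k-1}(X^\bullet), \]
and the matching-fiber functor $N^k$ commutes with $\Omega$, being a limit. Composing these for $k = n+1, \dots, m$, the fiber of $\Tot_m \to \Tot_n$ acquires a natural filtration whose associated graded pieces are the $\Omega^k N^k X^\bullet$. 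Since each $k \ge n+1$ and the hypothesis $m \le 2n+1$ is equivalent to $r = 2n-m+2 \le n+1$, every such piece is at least an $r$-fold loop object, making it plausible that the iterated extension is itself naturally an $r$-fold loop object.

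I would carry this out by induction on $d$, with base case $d = 0$ trivial ($F_{n,n}$ is the constant functor at the zero object). For the inductive step, suppose $F_{n,m-1}$ has been constructed together with a natural equivalence $\Omega^{r+1} F_{n,m-1}(X^\bullet) \simeq \mathrm{fib}(\Tot_{m-1}(X^\bullet) \to \Tot_n(X^\bullet))$. The natural map $\mathrm{fib}(\Tot_m \to \Tot_n) \to \mathrm{fib}(\Tot_{m-1} \to \Tot_n) \simeq \Omega^r(\Omega F_{n,m-1}(X^\bullet))$ must then be realized as $\Omega^r$ of a morphism $G(X^\bullet) \to \Omega F_{n,m-1}(X^\bullet)$ in $\mathcal{C}$; one then sets $F_{n,m}(X^\bullet) = G(X^\bullet)$, defined as the fiber of this delooped morphism. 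Since the fiber of the $r$-looped morphism is $\Omega^m N^m X^\bullet = \Omega^r(\Omega^{m-r} N^m X^\bullet)$, the fiber of the delooped morphism is $\Omega^{m-r} N^m X^\bullet$, and $\Omega^r F_{n,m}(X^\bullet) \simeq \mathrm{fib}(\Tot_m \to \Tot_n)$ as required. Limit preservation of $F_{n,m}$ follows from its construction as a fiber of limit-preserving functors, and functoriality in $X^\bullet$ is preserved at each inductive step.

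The main obstacle is the $r$-fold delooping of the connecting map, which is not formal outside the stable setting. I would carry it out via the explicit obstruction theory with quasicategories promised in the abstract: model $\Tot_m(X^\bullet)$ as the space of coherent sections over the nerve of $\Delta^{\le m}$, filter by the cells attached relative to $\Delta^{\le n}$, and identify the contribution of a $k$-dimensional cell with a mapping space valued in $N^k X^\bullet$. The bound $m \le 2n+1$, equivalently $r \le n+1$, ensures that for every such cell the associated mapping space is delooped by at least $r$ levels, so the attaching data assemble coherently into a single $r$-fold delooped morphism in $\mathcal{C}$. The combinatorial bookkeeping of these attaching maps, and the verification of their compatibility with the inductive construction, is where the bulk of the technical work lies, and the dimension inequality $m \le 2n+1$ is precisely what makes the relevant obstruction classes vanish uniformly.
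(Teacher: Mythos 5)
There is a genuine gap, and it sits exactly where you place ``the bulk of the technical work'': the $r$-fold delooping of the connecting data. Everything before that is either formal or, in one spot, circular: in the inductive step you ask that the map $\mathrm{fib}(\Tot_m(X^\bullet) \to \Tot_n(X^\bullet)) \to \Omega^{r}\bigl(\Omega F_{n,m-1}(X^\bullet)\bigr)$ ``be realized as $\Omega^r$ of a morphism $G(X^\bullet)\to \Omega F_{n,m-1}(X^\bullet)$,'' but the existence of such a $G$ with $\Omega^r G(X^\bullet)\simeq \mathrm{fib}(\Tot_m\to\Tot_n)$ is precisely the theorem, and the induction on $m-n$ gives no purchase on it. More importantly, the mechanism you propose for the delooping cannot work as stated: your obstruction theory is applied to mapping spaces ``valued in $N^k X^\bullet$'' inside an arbitrary pointed $\mathcal{C}$, with vanishing supposed to follow because each graded piece $\Omega^k N^k X^\bullet$ (for $k\ge n+1\ge r$) is at least an $r$-fold loop object. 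But an iterated extension of $r$-fold loop objects is an $r$-fold loop object only if the attaching maps are coherently $r$-fold loop maps; the pieces being loop objects kills no obstruction group, and for general $\mathcal{C}$ and general $X^\bullet$ the relevant mapping spaces have no connectivity to exploit. (A small symptom: $m\le 2n+1$ is equivalent to $r\ge 1$, not to $r\le n+1$; the latter is equivalent to $m\ge n+1$.)

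The missing idea is to move the whole desuspension problem away from $X^\bullet$ and $\mathcal{C}$ to a single universal object. The fiber of $\Tot_m\to\Tot_n$ is corepresented, as a weighted limit, by the functor $T\colon\Delta^{\le m}\to\mathcal{S}_*$ obtained as the cofiber of $\mathrm{Lan}(\ast_+)\to\ast_+$ from $\Delta^{\le n}$; by Yoneda, delooping the fiber $r$ times naturally in $X^\bullet$ and for every pointed $\mathcal{C}$ at once is equivalent to desuspending this one diagram of pointed spaces $r$ times. After passing to the cofinal finite poset of nonempty subsets of $[m]$, $T$ is pointwise a wedge of $(n+1)$-spheres and contractible on subsets of size $\le n+1$, so the lifting problem lives over a poset of dimension $m-n-1$; there Freudenthal makes $\Sigma^d$ an $(n+1-d)$-connected functor between categories of finite wedges of spheres, and the inequality $m-n-1\le n+1-d$, i.e.\ $d\le 2n-m+2$, is what makes the obstructions vanish. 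The connectivity thus comes from the cells of the corepresenting object, not from the deloopings of $N^k X^\bullet$, and the same reduction is what makes $F_{n,m}$ limit-preserving and natural for free. Your proposal has the right numerology and the right overall shape (a cell-by-cell obstruction argument governed by $m\le 2n+1$), but without the reduction to the corepresenting diagram the obstruction theory has nothing of bounded dimension and high connectivity to act on.
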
 

In less formal terms, the fiber of $\mathrm{Tot}_m(X^\bullet) \to
\mathrm{Tot}_n(X^\bullet)$ depends
only on $\Omega^r X^\bullet$ as a cosimplicial object (and this fiber is in particular an
$r$-fold loop space). 
Our arguments are based on elementary obstruction-theoretic techniques, but in
the setting of the quasi-category model of $\infty$-categories. They in
particular apply to certain instances of homotopy (co)limits over finite posets
such as the poset of nontrivial subspaces of a finite-dimensional vector space. 

However, as a result, we do not obtain an explicit description of the functor
$F_{n,m}$: its existence is established by showing that a certain combinatorially defined
functor desuspends $r$ times. We show that such a desuspension \emph{exists} by
obstruction theory, but we do not write it down. 

In a similar vein, we obtain the following result.

\begin{theorem} 
\label{paracompact1}
Let $X$ be a pointed topological space covered by $n$ open sets $U_1$, $U_2$,
$\dots$, $U_n$ $\subset X$, each containing the basepoint. Suppose the intersection
of any collection of at most $r$ of the $\left\{U_i\right\}$ is
weakly contractible.
Then $X$ has the weak homotopy type of a $(2r - n  + 1)$-fold suspension. 
\end{theorem}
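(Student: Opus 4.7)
The plan is to apply the colimit analogue of Theorem~\thref{maintheorem}---obtained by passing to the opposite of pointed spaces---to the Čech nerve of the cover $\{U_i\}$. First I would form the Čech simplicial space $Y_\bullet \in \mathrm{Fun}(\Delta^{op}, \mathcal{S}_*)$, with
\[ Y_k = \bigvee_{(i_0, \ldots, i_k) \in \{1, \ldots, n\}^{k+1}} U_{i_0} \cap \cdots \cap U_{i_k} \]
(wedge of pointed intersections along the common basepoint, with the standard face and degeneracy maps). Čech descent---valid under mild regularity on the cover, e.g.\ numerability---gives a weak equivalence $|Y_\bullet| \simeq X$; moreover, any simplex of dimension $\geq n$ has a repeated index and is thus degenerate, so $|Y_\bullet| = |Y_\bullet|_{n-1}$.

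Next, the contractibility hypothesis translates directly: for $k \leq r-1$ each summand $U_{i_0 \cdots i_k}$ in $Y_k$ involves at most $r$ distinct indices, hence is contractible, whence $Y_k \simeq *$. A pointed homotopy colimit of a levelwise contractible diagram is itself contractible, so $|Y_\bullet|_{r-1} \simeq *$. Now I invoke Theorem~\thref{maintheorem} in the opposite $\infty$-category $\mathcal{S}_*^{op}$---where $\Omega$ becomes $\Sigma$ and fiber sequences become cofiber sequences---to obtain the dual statement: for $a \leq b \leq 2a+1$, the cofiber of $|Y_\bullet|_a \to |Y_\bullet|_b$ is a $(2a - b + 2)$-fold suspension. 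Setting $a = r-1$ and $b = n-1$: the inequality $b \leq 2a+1$ becomes $n \leq 2r$ (automatic outside the vacuous range $2r - n + 1 \leq 0$); the cofiber is $X/{*} \simeq X$; and the exponent is $2(r-1) - (n-1) + 2 = 2r - n + 1$, as required.

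The main obstacle is justifying the passage from Theorem~\thref{maintheorem}---stated for fibers of partial totalizations of cosimplicial objects---to its cofiber/simplicial counterpart. Since the main theorem holds in any pointed $\infty$-category $\mathcal{C}$ and is proved by elementary obstruction theory in quasicategories, which is self-dual under $\mathcal{C} \rightsquigarrow \mathcal{C}^{op}$, this passage should be routine. A secondary technical point is confirming the precise form of Čech descent used (i.e.\ $|Y_\bullet| \simeq X$), which holds in the standard topological setting expected by the theorem's label.
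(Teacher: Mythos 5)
Your overall strategy---present $X$ as a pointed homotopy colimit of a diagram of intersections, observe that the diagram is contractible on the ``small'' part, and invoke the dual ($\mathcal{S}_*^{\mathrm{op}}$) form of the deloopability theorem to realize $X$ as the cofiber of $\ast \to X$ delooped---is the same as the paper's. The dualization step you worry about is not the problem: the results of Section \ref{sec:deloop} are stated for an arbitrary pointed $\infty$-category with limits, so taking $\mathcal{A} = \mathcal{S}_*^{\mathrm{op}}$ is legitimate. The genuine gap is the claim that $|Y_\bullet| = |Y_\bullet|_{n-1}$ because ``any simplex of dimension $\geq n$ has a repeated index and is thus degenerate.'' A simplex of the \v{C}ech nerve indexed by a tuple $(i_0,\dots,i_k)$ is degenerate only when two \emph{adjacent} indices coincide (it must lie in the image of some $s_j$); a tuple such as $(1,2,1)$ has a repeated index but is nondegenerate. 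Concretely, for $n=2$ with $U_1=U_2=X$, the pointed \v{C}ech nerve has $|Y_\bullet|_1 \simeq X \vee \Sigma X$ while $|Y_\bullet| \simeq X$. So the partial realization $|Y_\bullet|_{n-1}$ is not $X$, and your application of the dual of \Cref{maintheorem} with $b = n-1$ only shows that $|Y_\bullet|_{n-1}$---the wrong space---is a $(2r-n+1)$-fold suspension. This failure of finite-dimensionality of $\Delta^{\mathrm{op}}$-indexed diagrams is exactly the technical point the paper is organized around.

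The paper's proof sidesteps this by never truncating the $\Delta^{\mathrm{op}}$-indexed \v{C}ech nerve: it instead uses the Dugger--Isaksen presentation of $X$ as the homotopy colimit, over the poset $\mathcal{P}(\{1,\dots,n\})$ of \emph{nonempty subsets} of the index set (a finite poset whose nerve has dimension $n-1$), of the contravariant functor $S \mapsto \bigcap_{i\in S} U_i$, and then applies \Cref{powsetdeloop} directly with $|S| = n$. To repair your argument you would either quote that presentation, or replace $\Delta^{\mathrm{op}}$ by (the opposite of) $\dlin_{/[n]}$ via the cofinality statement \Cref{dln} together with \Cref{preimagedeloop}---which is precisely how the paper deduces \Cref{kdeloopable} from \Cref{powsetdeloop} in the first place. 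A minor additional point: the identification $|Y_\bullet| \simeq X$ for the levelwise \emph{wedge} (rather than disjoint union) requires collapsing the realization of the contractible sub-\v{C}ech-nerve of basepoints, which is harmless but should be said.
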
 

Results of a similar flavor in Goodwillie calculus have been obtained by
Arone-Dwyer-Lesh \cite{ADL}. 

\subsection*{Acknowledgments} This result started out as a piece of 
the work in \cite{MS}
on the calculation of Picard groups of certain ring spectra. We are grateful to
Mike Hopkins for suggesting the original project and for several discussions,
and to Haynes Miller for helpful comments. 

\section{Deloopable subcategories}\label{sec:deloop}
We first consider this question more generally. Let $\mathcal{D}$ be an
$\infty$-category, and let $\mathcal{C} \subset \mathcal{D}$ be the inclusion
of a full subcategory. 
Given a functor $F\colon \mathcal{D} \to \mathcal{S}_*$, we can consider 
the fiber of the map of (homotopy) limits
\begin{equation} \label{limmap} \holim_{\mathcal{D}} F \to \holim_{\mathcal{C}} F ,\end{equation} and ask when this is a functor 
of some iterated loop space of $F$ (rather than simply of $F$). 

\begin{definition} \label{deloopabledef}
Let $\mathcal{C} \subset \mathcal{D}$ be as above. We say that the inclusion
$\mathcal{C} \to \mathcal{D}$  is \textbf{$k$-fold deloopable} if there is
an accessible, limit-preserving functor $\mathscr{G}\colon \mathrm{Fun}(\mathcal{D}, \mathcal{S}_*) \to
\mathcal{S}_*$ 
and a natural equivalence
\[ \mathscr{G}( \Omega^k F) \simeq \mathrm{fib}\left(  \holim_{\mathcal{D}} F \to
\holim_{\mathcal{C}} F\right), \quad F \in \mathrm{Fun}(\mathcal{D},
\mathcal{S}_*). \]
In other words, the fiber of \eqref{limmap} should functorially factor through
$\Omega^k$. 
\end{definition} 

\begin{example}
By the discussion in the introduction, $\Delta^{\leq n-1 } \subset \dln$ is $n$-fold deloopable. 
\end{example}

We begin by proving some elementary properties of $k$-fold deloopability. 
\begin{proposition} 
\label{kfolddeloopable}
Let $\mathcal{C} \subset \mathcal{D}$  be a fully faithful inclusion. Then the
following are equivalent: 
\begin{enumerate}
\item $\mathcal{C} \subset \mathcal{D}$ is $k$-fold deloopable.  
\item Let $\mathrm{Fun}_{\mathcal{C}}(\mathcal{D},
\mathcal{S}_*)$ be the full subcategory of $\mathrm{Fun}(\mathcal{D},
\mathcal{S}_*)$
consisting 
of those functors that take $\mathcal{C}$ to $\ast$. Then the functor
\[ \varprojlim_{\mathcal{D}}\colon  \mathrm{Fun}_{\mathcal{C}}(\mathcal{D},
\mathcal{S}_*) \to \mathcal{S}_* \]
factors through $\Omega^k$. In other words, there is an accessible,
limit-preserving functor $\mathscr{H}\colon \mathrm{Fun}_{\mathcal{C}}(\mathcal{D},
\mathcal{S}_*) \to \mathcal{S}_*$ together with a natural identification
$ \varprojlim_{\mathcal{D}} \simeq \mathscr{H} \circ \Omega^k $ of functors
$\mathrm{Fun}_{\mathcal{C}}(\mathcal{D},
\mathcal{S}_*) \to \mathcal{S}_*$. 
\end{enumerate}
\end{proposition}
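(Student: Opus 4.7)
The plan is to prove both implications using the following basic observation: if $F \in \mathrm{Fun}_\mathcal{C}(\mathcal{D}, \mathcal{S}_*)$, then $\holim_\mathcal{C} F \simeq \ast$, so the fiber in \eqref{limmap} is just $\holim_\mathcal{D} F$.

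For (1) $\Rightarrow$ (2), I would take $\mathscr{H}$ to be the restriction of $\mathscr{G}$ to the full subcategory $\mathrm{Fun}_\mathcal{C}(\mathcal{D}, \mathcal{S}_*)$. Because $\Omega^k$ preserves pointwise contractibility, $\Omega^k F$ still lies in $\mathrm{Fun}_\mathcal{C}$ whenever $F$ does, and the defining equivalence of $\mathscr{G}$ specializes to $\mathscr{H}(\Omega^k F) \simeq \holim_\mathcal{D} F$. Accessibility and limit-preservation of $\mathscr{H}$ descend from those of $\mathscr{G}$: the full subcategory $\mathrm{Fun}_\mathcal{C}(\mathcal{D}, \mathcal{S}_*)$ is closed under limits in $\mathrm{Fun}(\mathcal{D}, \mathcal{S}_*)$, so limits in the subcategory agree with ambient ones.

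For (2) $\Rightarrow$ (1), the key step is to functorially replace an arbitrary $F \in \mathrm{Fun}(\mathcal{D}, \mathcal{S}_*)$ by an object of $\mathrm{Fun}_\mathcal{C}(\mathcal{D}, \mathcal{S}_*)$. Let $i \colon \mathcal{C} \hookrightarrow \mathcal{D}$ denote the inclusion, and write $i^*$ for restriction and $i_*$ for right Kan extension along $i$. Since $i$ is fully faithful, the counit $i^* i_* \to \mathrm{id}$ is an equivalence, so if we set $F' := \mathrm{fib}(F \to i_* i^* F)$, the restriction $F'|_\mathcal{C}$ is the fiber of an equivalence, hence contractible, and therefore $F' \in \mathrm{Fun}_\mathcal{C}(\mathcal{D}, \mathcal{S}_*)$. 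Because $\holim_\mathcal{D} \circ\, i_* \simeq \holim_\mathcal{C}$, applying $\holim_\mathcal{D}$ to the fiber sequence $F' \to F \to i_* i^* F$ yields an equivalence $\holim_\mathcal{D} F' \simeq \mathrm{fib}(\holim_\mathcal{D} F \to \holim_\mathcal{C} F)$.

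Now define $\mathscr{G}(F) := \mathscr{H}(F')$. Since $\Omega^k$ commutes with the right adjoint $i_* i^*$ and with taking fibers, $(\Omega^k F)' \simeq \Omega^k(F')$, and hypothesis (2) gives
\[
\mathscr{G}(\Omega^k F) \simeq \mathscr{H}(\Omega^k F') \simeq \holim_\mathcal{D} F' \simeq \mathrm{fib}(\holim_\mathcal{D} F \to \holim_\mathcal{C} F),
\]
as required. The functor $F \mapsto F'$ is a fiber of a natural transformation between the limit-preserving, accessible functors $\mathrm{id}$ and $i_* i^*$, so it is itself limit-preserving and accessible; these properties then transfer from $\mathscr{H}$ to $\mathscr{G}$. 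The main point to watch is the compatibility of loops with the replacement $F \mapsto F'$, but this reduces to the general fact that $\Omega$ commutes with both fibers and right adjoints.
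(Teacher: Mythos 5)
Your proposal is correct and follows essentially the same route as the paper: for (1)$\Rightarrow$(2) you observe that the fiber equals the whole limit when $F|_{\mathcal{C}}\simeq\ast$, and for (2)$\Rightarrow$(1) you pass to $\mathrm{fib}\left(F\to\mathrm{Ran}_{\mathcal{C}\to\mathcal{D}}F|_{\mathcal{C}}\right)$, exactly the paper's reduction. The only difference is that you spell out the compatibility of $\Omega^k$ with the replacement $F\mapsto F'$ and the accessibility bookkeeping, which the paper leaves implicit.
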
 
\begin{proof} 
Suppose $\mathcal{C} \subset \mathcal{D}$ is $k$-fold deloopable. 
Then, for $F \in \mathrm{Fun}_{\mathcal{C}}(\mathcal{D}, \mathcal{S}_*)$, 
$\varprojlim_{\mathcal{D}} F $ is naturally equivalent to the fiber of
$\varprojlim_{\mathcal{D}} F  \to \varprojlim_{\mathcal{C}} F$ since
$F|_{\mathcal{C}}$ is constant at $\ast$. But this factors through $\Omega^k$
by $k$-fold deloopability. 

Conversely, suppose the second condition satisfied. In this case, we note that
for any $F \in \mathrm{Fun}_{}(\mathcal{D}, \mathcal{S}_*)$, we have
\[ \varprojlim_{\mathcal{C}} F \simeq \varprojlim_{\mathcal{D}} \mathrm{Ran}_{\mathcal{C} \to
\mathcal{D}} F|_{\mathcal{C}}, \]
where $\mathrm{Ran}_{\mathcal{C} \to \mathcal{D}}$ denotes the right Kan
extension  from $\mathcal{C}$ to $\mathcal{D}$. We refer to \cite[\S
4.3]{highertopos} for a treatment of Kan extensions for $\infty$-categories. 
In particular, the fiber of \eqref{limmap} can be identified with 
\[ \varprojlim_{\mathcal{D}} \mathrm{fib}\left( F \to \mathrm{Ran}_{\mathcal{C}
\to \mathcal{D}} F|_{\mathcal{C}}\right),   \]
and the functor 
$\mathrm{fib}\left( F \to \mathrm{Ran}_{\mathcal{C}
\to \mathcal{D}} F|_{\mathcal{C}}\right) \in \mathrm{Fun}(\mathcal{D}, \mathcal{S}_*)$ has the property that it takes the
value $\ast$ on $\mathcal{C} \subset \mathcal{D}$, since $\mathcal{C} \subset
\mathcal{D}$ is a \emph{full} subcategory. Therefore, by the second
condition, this is a functor of $\Omega^k F$. 
\end{proof}

\begin{corollary} 
\label{preimagedeloop}
Let $i \colon \mathcal{C} \to \mathcal{D}$ be a fully faithful inclusion. Let
$p\colon \mathcal{D}' \to \mathcal{D}$ be a \emph{right cofinal} functor and let
$\mathcal{C}' \subset \mathcal{D}'$ be the subcategory of all objects that
map, under $p$, into the essential image of $i$. Then if $\mathcal{C}' \subset
\mathcal{D}'$ is $k$-fold deloopable, so is $\mathcal{C} \subset \mathcal{D}$.
\end{corollary}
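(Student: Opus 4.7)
The plan is to verify deloopability using the equivalent characterization in \thref{kfolddeloopable}(2): it suffices to exhibit an accessible, limit-preserving functor $\mathscr{H}\colon \mathrm{Fun}_{\mathcal{C}}(\mathcal{D},\mathcal{S}_*) \to \mathcal{S}_*$ together with a natural equivalence $\varprojlim_{\mathcal{D}} \simeq \mathscr{H} \circ \Omega^k$. By hypothesis, the analogous functor $\mathscr{H}'\colon \mathrm{Fun}_{\mathcal{C}'}(\mathcal{D}',\mathcal{S}_*) \to \mathcal{S}_*$ already exists on the $\mathcal{D}'$-side, so the strategy is to transport the problem across $p$ by precomposition and then apply $\mathscr{H}'$.

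Concretely, I would first check that precomposition with $p$ restricts to a functor
\[ p^*\colon \mathrm{Fun}_{\mathcal{C}}(\mathcal{D},\mathcal{S}_*) \to \mathrm{Fun}_{\mathcal{C}'}(\mathcal{D}',\mathcal{S}_*). \]
This is immediate from the definition of $\mathcal{C}'$: any object of $\mathcal{C}'$ is sent by $p$ to something in the essential image of $i$, where any $F \in \mathrm{Fun}_{\mathcal{C}}(\mathcal{D},\mathcal{S}_*)$ is equivalent to $\ast$. Next, because $p$ is right cofinal, \cite[\S 4.1]{highertopos} gives a natural equivalence $\varprojlim_{\mathcal{D}} F \simeq \varprojlim_{\mathcal{D}'} p^* F$ for every $F$. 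Finally, since $\Omega^k$ is computed pointwise in $\mathcal{S}_*$, precomposition with $p$ commutes with $\Omega^k$, i.e.\ $p^*(\Omega^k F) \simeq \Omega^k (p^* F)$.

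Combining these ingredients, I would define
\[ \mathscr{H}(F) := \mathscr{H}'( p^* F), \qquad F \in \mathrm{Fun}_{\mathcal{C}}(\mathcal{D},\mathcal{S}_*), \]
and compute
\[ \mathscr{H}(\Omega^k F) \simeq \mathscr{H}'( p^* \Omega^k F) \simeq \mathscr{H}'( \Omega^k p^* F) \simeq \varprojlim_{\mathcal{D}'} p^* F \simeq \varprojlim_{\mathcal{D}} F, \]
naturally in $F$, which is exactly the desired factorization.

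The only point to check carefully, and the main (mild) obstacle, is that $\mathscr{H}$ inherits accessibility and preservation of limits from $\mathscr{H}'$. Preservation of limits is clear because $p^*$ preserves limits (they are computed pointwise in $\mathcal{S}_*$) and $\mathscr{H}'$ does so by hypothesis. For accessibility, one uses that $p^*$ is an accessible functor between presentable $\infty$-categories of functors into $\mathcal{S}_*$, and that the subcategories cut out by the vanishing conditions on $\mathcal{C}$ and $\mathcal{C}'$ are accessible (they are defined by requiring that a small collection of evaluation functors land in the contractible space $\ast$), so the composition $\mathscr{H}' \circ p^*$ remains accessible. Then \thref{kfolddeloopable} applied to $\mathcal{C} \subset \mathcal{D}$ finishes the proof.
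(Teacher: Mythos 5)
Your proposal is correct and follows the same route as the paper: apply the criterion of \thref{kfolddeloopable}, precompose with $p$ to land in $\mathrm{Fun}_{\mathcal{C}'}(\mathcal{D}',\mathcal{S}_*)$, and use right cofinality to identify $\varprojlim_{\mathcal{D}} F$ with $\varprojlim_{\mathcal{D}'} F\circ p$. You simply spell out the accessibility and limit-preservation checks that the paper leaves implicit.
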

\begin{proof} 

To see this, we use the criterion of \Cref{kfolddeloopable}. Consider any functor $F\colon \mathcal{D}
\to \mathcal{S}_*$ that maps everything in $\mathcal{C}$ to a weakly contractible
space. Then
$F \circ p$ maps everything in $\mathcal{C}' \subset \mathcal{D}'$ to $\ast$.
In particular, 
$ F \mapsto \lim_{\mathcal{D}} F = \lim_{\mathcal{D}'} F \circ p$ is a functor of $\Omega^k
F$. 
\end{proof}

We now give a basic criterion for $k$-fold
deloopability. Consider the $\infty$-category $\mathrm{Fun}( \mathcal{D},
\mathcal{S}_*)$ of
functors $\mathcal{D} \to \mathcal{S}_*$. 
If $F \in \mathrm{Fun}( \mathcal{D},
\mathcal{S}_*)$, then the homotopy limit $\holim_{\mathcal{D}}F $ is the
(pointed) mapping
space $\hom_{\mathrm{Fun}( \mathcal{D}, \mathcal{S}_*)}( \ast_+, F)$, where $\ast_+$ is the constant functor
with value $\ast_+ = S^0$. Analogously, the homotopy limit $\holim_{\mathcal{C}} F$ is given by 
\[ \holim_{\mathcal{C}} F \simeq 
\hom_{\mathrm{Fun}( \mathcal{C}, \mathcal{S}_*)}(\ast_+, F|_{\mathcal{C}}) . \]

Now let $\mathrm{Lan}_{\mathcal{C} \to \mathcal{D}}$ denote the left Kan
extension operation, from functors out of $\mathcal{C}$ to functors out of
$\mathcal{D}$. It follows by adjointness that
\[ \holim_{\mathcal{C}} F  \simeq 
\hom_{\mathrm{Fun}( \mathcal{D}, \mathcal{S}_*)}(\mathrm{Lan}_{\mathcal{C}
\to \mathcal{D}}(\ast_+), F) , \]
and the natural forgetful map 
\eqref{limmap}
is obtained from the counit map
\[ \mathrm{Lan}_{\mathcal{C} \to \mathcal{D}}(\ast_+) \to \ast_+,  \]
in $\mathrm{Fun}(\mathcal{D}, \mathcal{S}_*)$. 
Therefore, it follows that the fiber of \eqref{limmap} (over the given basepoint, that is) is given by 
the functor
\[ F \mapsto \hom_{\mathrm{Fun}( \mathcal{D}, \mathcal{S}_*)}( T, F),  \]
where the functor $T = T_{\mathcal{C}, \mathcal{D}}\colon \mathcal{D} \to \mathcal{S}_*$ is the homotopy pushout
\begin{equation} \label{T}\xymatrix{
\mathrm{Lan}_{\mathcal{C} \to \mathcal{D}}(\ast_+) \ar[d]  \ar[r] &  \ast \ar[d]  \\
\ast_+ \ar[r] &  T
}.\end{equation}

When the \emph{functor} $T\colon \mathcal{D} \to \mathcal{S}_*$ can be desuspended
$k$ times, so that $T \simeq \Sigma^k T'$ for some
$T' \in \mathrm{Fun}( \mathcal{D}, \mathcal{S}_*)$, then the fiber of
\eqref{limmap} is given by $\Omega^k \hom_{\mathrm{Fun}(\mathcal{D},
\mathcal{S}_*)}(T', F)$. 
The main strategy of this paper is to use a direct obstruction-theoretic
argument to make such desuspensions. 

Combining these ideas, we get:
\begin{proposition} 
Let $\mathcal{C} \subset \mathcal{D}$ be an inclusion of a full subcategory.
Then the following are equivalent: 
\begin{enumerate}
\item $\mathcal{C}  \subset \mathcal{D}$ is $k$-fold deloopable. 
\item The functor $T \colon\mathcal{D} \to \mathcal{S}_*$ defined by \eqref{T} can
be functorially desuspended $k$ times.
\end{enumerate}
\end{proposition}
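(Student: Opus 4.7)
The plan is to prove the equivalence via the $\infty$-categorical Yoneda lemma together with the $\Sigma \dashv \Omega$ adjunction in the pointed functor category $\mathrm{Fun}(\mathcal{D},\mathcal{S}_*)$, taking the fiber identification $F \mapsto \hom_{\mathrm{Fun}(\mathcal{D},\mathcal{S}_*)}(T,F)$ from the paragraph preceding the statement as the central input. Throughout, I will write $\hom$ for that mapping space and treat suspension and looping on functors as computed pointwise in $\mathcal{S}_*$.

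For $(2) \Rightarrow (1)$, I would suppose $T \simeq \Sigma^k T'$ for some $T' \in \mathrm{Fun}(\mathcal{D},\mathcal{S}_*)$ and simply define $\mathscr{G}(F) := \hom(T',F)$. Corepresentability makes $\mathscr{G}$ limit-preserving automatically, while accessibility follows from the fact that every object of the presentable $\infty$-category $\mathrm{Fun}(\mathcal{D},\mathcal{S}_*)$ is $\kappa$-compact for some $\kappa$. The $\Sigma \dashv \Omega$ adjunction then delivers
\[
\mathscr{G}(\Omega^k F) = \hom(T', \Omega^k F) \simeq \hom(\Sigma^k T', F) \simeq \hom(T, F) \simeq \mathrm{fib}\bigl(\holim_{\mathcal{D}} F \to \holim_{\mathcal{C}} F\bigr),
\]
naturally in $F$, which is exactly the condition in \thref{deloopabledef}.

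For $(1) \Rightarrow (2)$, I would invoke the adjoint functor theorem for presentable $\infty$-categories (e.g.\ \cite[5.5.2.7]{highertopos}) in $\mathrm{Fun}(\mathcal{D},\mathcal{S}_*)$ to corepresent the accessible limit-preserving functor $\mathscr{G}$ by some object $T' \in \mathrm{Fun}(\mathcal{D},\mathcal{S}_*)$, producing a natural equivalence $\mathscr{G}(F) \simeq \hom(T',F)$. Chaining the hypothesized equivalence $\mathscr{G}(\Omega^k F) \simeq \hom(T,F)$ with this representability and with $\Sigma \dashv \Omega$ then gives
\[
\hom(\Sigma^k T', F) \simeq \hom(T', \Omega^k F) \simeq \mathscr{G}(\Omega^k F) \simeq \hom(T, F),
\]
naturally in $F \in \mathrm{Fun}(\mathcal{D},\mathcal{S}_*)$. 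The $\infty$-categorical Yoneda lemma would then promote this to an equivalence $T \simeq \Sigma^k T'$ inside $\mathrm{Fun}(\mathcal{D},\mathcal{S}_*)$, which is the desired functorial $k$-fold desuspension.

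The main obstacle I expect is justifying the corepresentability of $\mathscr{G}$ in the direction $(1) \Rightarrow (2)$; this is precisely why the accessibility and limit-preservation clauses appear in \thref{deloopabledef} rather than being afterthoughts. Once representability is in hand, the remainder is bookkeeping with Yoneda and the pointwise $\Sigma \dashv \Omega$ adjunction.
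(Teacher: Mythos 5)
Your proof is correct and follows essentially the same route as the paper: both identify the fiber functor as corepresented by $T$, use the adjoint functor theorem to corepresent $\mathscr{G}$, and conclude via the Yoneda lemma and the $\Sigma \dashv \Omega$ adjunction. You have merely spelled out in more detail the steps the paper states tersely.
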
 
\begin{proof} 
The fiber of \eqref{limmap}, as a functor $\mathrm{Fun}(\mathcal{D},
\mathcal{S}_*) \to \mathcal{S}_*$, is corepresentable by $T$. Thus, by
Yoneda's lemma, to deloop
the functor \eqref{limmap} $k$ times by a \emph{corepresentable} functor is
equivalent to desuspending the corepresenting object $T$ $k$ times. Any functor
$\mathscr{G}$ as in \Cref{deloopabledef} is corepresentable by the adjoint
functor theorem \cite[Proposition 5.5.2.7]{highertopos}. 
\end{proof}

\newcommand{\fun}{\mathrm{Fun}}
There is no reason to restrict to functors taking values in $\mathcal{S}_*$: we
could take any pointed $\infty$-category with enough limits. For example, we could take
$\mathcal{S}_*^{\mathrm{op}}$ so as to get a result about $\mathcal{C}$-valued
and $\mathcal{D}$-valued homotopy \emph{colimits} of spaces. 
Indeed, let $\mathcal{A}$ be any pointed $\infty$-category with limits. 
In this case, given a functor $f\colon \mathcal{D} \to \mathcal{S}_*$ and a functor
$F\colon \mathcal{D} \to \mathcal{A}$
, we can form
an internal mapping object $\hom_{\fun(\mathcal{D}, \mathcal{A})}(f, F) \in
\mathcal{A}$. 
This construction is determined by the properties that it should send homotopy
colimits in $f$ to homotopy limits and that if $f$ is $\hom(d,
\cdot)_+$ for $d \in \mathcal{D}$, then 
$\hom_{\fun(\mathcal{D}, \mathcal{A})}(f, F) \in
\mathcal{A}$ is given by $F(d)$. If $f$ is $\ast_+$, then one gets the homotopy
limit of $F$. 
In particular, we find that 
the fiber of the map 
\( \varprojlim_{\mathcal{D}} F \to  \varprojlim_{\mathcal{C}} F \) can be
identified with $\hom_{\fun(\mathcal{D}, \mathcal{A})}( T, F)$ as before. In
particular, if $T$ is $k$-fold desuspendable, then this fiber can be obtained as a
$k$-fold loop space. 
We state this formally.

\begin{corollary} 
Let $\mathcal{A}$ be any pointed $\infty$-category with limits. Then if $\mathcal{C}
\subset \mathcal{D}$ is a $k$-fold deloopable inclusion, the functor
$\fun(\mathcal{D}, \mathcal{A}) \to \mathcal{A}$ sending $F \in 
\fun(\mathcal{D}, \mathcal{A})$ to the 
fiber of
$\varprojlim_{\mathcal{D}} F \to \varprojlim_{\mathcal{C}}F$ can be expressed as $\mathscr{G} \circ \Omega^k$ for a
limit-preserving functor $\mathscr{G}\colon \fun(\mathcal{D}, \mathcal{A}) \to
\mathcal{A}$. 
\end{corollary}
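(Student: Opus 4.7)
The plan is to adapt the external-mapping-object argument from the preceding space-valued discussion to the target $\mathcal{A}$. Recall that for $f \in \fun(\mathcal{D}, \mathcal{S}_*)$ and $F \in \fun(\mathcal{D}, \mathcal{A})$ one has a mapping object $\hom_{\fun(\mathcal{D}, \mathcal{A})}(f, F) \in \mathcal{A}$, determined by the facts that it sends the corepresentable $\hom_\mathcal{D}(d, -)_+$ in $f$ to $F(d)$ and converts homotopy colimits in $f$ into homotopy limits in $\mathcal{A}$. Since $\mathcal{C} \subset \mathcal{D}$ is $k$-fold deloopable, the preceding proposition supplies an object $T' \in \fun(\mathcal{D}, \mathcal{S}_*)$ with a functorial equivalence $T \simeq \Sigma^k T'$, where $T$ is the pushout of \eqref{T}.

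First I would identify the fiber of $\holim_\mathcal{D} F \to \holim_\mathcal{C} F$ with $\hom_{\fun(\mathcal{D}, \mathcal{A})}(T, F)$, repeating the argument given in the excerpt verbatim for arbitrary $\mathcal{A}$. Namely, $\holim_\mathcal{D} F \simeq \hom(\ast_+, F)$ and $\holim_\mathcal{C} F \simeq \hom(\mathrm{Lan}_{\mathcal{C} \to \mathcal{D}}(\ast_+), F)$ by the adjunction already used in the space-valued case; applying the mapping object to the pushout defining $T$ produces a pullback square in $\mathcal{A}$, and since $\ast \in \fun(\mathcal{D}, \mathcal{S}_*)$ is the empty colimit, $\hom(\ast, F)$ is the zero object of $\mathcal{A}$, so that the pullback computes exactly the fiber of the restriction map.

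Next I would verify the suspension-loop identity
\[ \hom_{\fun(\mathcal{D}, \mathcal{A})}(\Sigma T'', F) \simeq \Omega\, \hom_{\fun(\mathcal{D}, \mathcal{A})}(T'', F) \]
for any $T'' \in \fun(\mathcal{D}, \mathcal{S}_*)$. This is immediate from writing $\Sigma T'' = \ast \sqcup_{T''} \ast$ and applying the colimits-to-limits property together with $\hom(\ast, F) \simeq 0$. Iterating yields $\hom(T, F) \simeq \hom(\Sigma^k T', F) \simeq \Omega^k \hom(T', F)$, and since the mapping object preserves limits in its second variable (limits in $\fun(\mathcal{D}, \mathcal{A})$ are computed pointwise and representables are sent to evaluation at a point), we may further rewrite this as $\hom(T', \Omega^k F)$.

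Setting $\mathscr{G}(F) := \hom_{\fun(\mathcal{D}, \mathcal{A})}(T', F)$ then gives the desired factorization $\mathrm{fib}(\holim_\mathcal{D} F \to \holim_\mathcal{C} F) \simeq \mathscr{G}(\Omega^k F)$, and $\mathscr{G}$ is limit-preserving by the same mapping-object argument. The only genuine content is the existence and functoriality of $\hom_{\fun(\mathcal{D}, \mathcal{A})}(-,-)$ in this generality; I expect this to be the main bookkeeping obstacle, but it is formal, being forced by the two listed characterizing properties via the co-Yoneda presentation of any $f$ as a colimit of representables.
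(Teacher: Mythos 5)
Your proposal is correct and follows essentially the same route as the paper: the paper states this corollary without a separate proof precisely because the preceding paragraph already identifies the fiber with $\hom_{\fun(\mathcal{D},\mathcal{A})}(T,F)$ via the internal mapping object and observes that a $k$-fold desuspension $T\simeq \Sigma^k T'$ (supplied by the preceding proposition) converts this into a $k$-fold loop object. Your write-up just makes explicit the steps the paper leaves implicit — the pushout-to-pullback identification of the fiber, the $\Sigma$/$\Omega$ adjunction via $\Sigma T''\simeq \ast\sqcup_{T''}\ast$, and the commutation of $\hom(T',-)$ with $\Omega^k$ — all of which are the intended justification.
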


We recall, for future reference, the explicit construction of the left Kan
extension, in the case $\mathcal{C}, \mathcal{D}$ are ordinary
categories. 
Given ordinary categories $\mathcal{C} \subset \mathcal{D}$, the left Kan extension 
$\mathrm{Lan}_{\mathcal{C} \to \mathcal{D}}(\ast) \colon \mathcal{D}
\to \mathcal{S}$
of the functor
$\mathcal{C} \to \mathcal{S}$ constant at $\ast$ (where we work with
\emph{unpointed} spaces) given by the functor to the category of simplicial sets
\[ \mathcal{D} \to \mathcal{S}, \quad d \mapsto N( \mathcal{C}_{/d}),  \]
where $\mathcal{C}_{/d}$ is the fiber product $\mathcal{C} \times_{\mathcal{D}}
\mathcal{D}_{/d}$ for $\mathcal{D}_{/d}$ the overcategory.
Note in particular that $T(c)$ is weakly contractible, for any $c \in \mathcal{C}$. 

\begin{remark}
Observe that \eqref{T} exhibits $T$ as an \emph{unreduced suspension} of
the functor $\mathrm{Lan}_{\mathcal{C} \to \mathcal{D}}(\ast) \colon
\mathcal{D} \to \mathcal{S}$. 
It follows that if 
$\mathrm{Lan}_{\mathcal{C} \to \mathcal{D}}(d) $ is a nonempty space for each
$d \in \mathcal{D}$, then each $T(d) \in \mathcal{S}_*$  can be written
as a suspension in $\mathcal{S}_*$.
In our primary
example of interest, $T(d)$ is a wedge of spheres for each $d$. Thus, even the
individual maps  $T(d_1) \to T(d_2)$ will be suspension maps. As a result, the
composite functor $\mathcal{D} \stackrel{T}{\to} \mathcal{S}_* \to
\mathrm{Ho}(\mathcal{S}_*)$ factors through $\Sigma$.
However, that alone does not mean
we can desuspend the functor $T$. 

The distinction between diagrams in the \emph{homotopy category} and in the
actual $\infty$-category $\mathcal{S}_*$ (e.g., presented by a model category
such as that of simplicial sets) is central to this paper. These ideas are
classical and have been studied in detail by 
Cooke \cite{cooke} for group actions and more generally by 
Dwyer and Kan in \cite{DK1, DK2, DK3}. We will not need the
sophisticated cohomological machinery that these authors develop, though. 
\end{remark}
\section{Obstruction theory}
Let $\mathcal{C} \subset \mathcal{C}'$ be an inclusion. 
In order to understand whether the functor $T_{\mathcal{C}, \mathcal{C}'}\colon
\mathcal{C}' \to \mathcal{S}_*$ introduced in the previous section can be
desuspended a certain number of times, we will need to introduce a
small amount of obstruction theory for $\infty$-categories. While such
obstruction theories are (in
various forms) classical, we will use very little. It will be convenient to 
use the \emph{quasi-category} model of $\infty$-categories.

Let $K$ be a simplicial set. Let $\mathcal{C}, \mathcal{D}$ be quasicategories
and $F\colon \mathcal{C} \to \mathcal{D}$ a functor (not necessarily an
inclusion). In our setting, it will be the
suspension functor $\Sigma\colon \mathcal{S}_* \to \mathcal{S}_*$ restricted to a
subcategory of $\mathcal{S}_*$, as we are trying to desuspend the functor $T$ from the
previous section. 
Our goal is to contemplate lifting diagrams of the form
\begin{equation} \label{lift} \xymatrix{
& \mathcal{C} \ar[d]^F \\
K \ar[r]^f \ar@{-->}[ru] &  \mathcal{D}
}\end{equation}
Such a lifting diagram is understood to take place in the \emph{homotopy}
category of quasicategories (e.g., of the Joyal model structure on
simplicial sets).  

\begin{proposition} 
\label{lifting}
Suppose $F \colon \mathcal{C} \to \mathcal{D}$ 
has the property that the essential image of $F$ contains that of $f$. 
Suppose $\dim K \leq m$ and suppose that 
for each $c_1, c_2 \in \mathcal{C}$, the map 
\[ F\colon \hom_{\mathcal{C}}(c_1, c_2) \to \hom_{\mathcal{D}}(Fc_1, Fc_2)  \]
is $(m-1)$-connected. Then a lift in \eqref{lift} always exists. 
\end{proposition}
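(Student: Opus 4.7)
The plan is to replace $F$ by a categorical fibration in the Joyal model structure, so that the lifting question becomes strict, and then to build the lift by induction on the skeleta of $K$. Since we are only asked to solve the lifting problem in the homotopy category of quasicategories, this reduction is harmless.

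For the base case, I work on the $0$-skeleton $K^{(0)}$: for each vertex $v$ of $K$, the hypothesis that the essential image of $F$ contains that of $f$ supplies an object $\tilde v \in \mathcal{C}$ together with an equivalence $F(\tilde v) \simeq f(v)$ in $\mathcal{D}$. Since $F$ is a categorical fibration, I may lift these equivalences to produce a strict lift $\tilde f^{(0)}\colon K^{(0)} \to \mathcal{C}$ with $F \circ \tilde f^{(0)} = f|_{K^{(0)}}$.

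Inductively, assume that a lift $\tilde f^{(k-1)}\colon K^{(k-1)} \to \mathcal{C}$ of $f|_{K^{(k-1)}}$ has been built for some $1 \leq k \leq m$. For each nondegenerate $k$-simplex $\sigma$ of $K$, the restriction $\sigma|_{\partial \Delta^k}$ has already been lifted, and the task reduces to solving
\[
\xymatrix{
\partial \Delta^k \ar[r] \ar[d] & \mathcal{C} \ar[d]^F \\
\Delta^k \ar[r] \ar@{-->}[ru] & \mathcal{D}.
}
\]
The key step is to identify the obstruction. Writing $c_0 = \tilde f^{(k-1)}(\sigma(0))$ and $c_k = \tilde f^{(k-1)}(\sigma(k))$, the homotopy fibre of the restriction map $\mathcal{C}^{\Delta^k} \to \mathcal{C}^{\partial \Delta^k}$ over the given boundary is equivalent to the space of $(k-1)$-simplices with prescribed boundary in $\hom_{\mathcal{C}}(c_0,c_k)$, compatibly (via $F$) with the analogous description for $\mathcal{D}$. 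Under this identification, the obstruction to the existence of a filler lies in (a torsor over) $\pi_{k-1}$ of the homotopy fibre of $\hom_{\mathcal{C}}(c_0,c_k) \to \hom_{\mathcal{D}}(Fc_0, Fc_k)$, which vanishes by the $(m-1)$-connectedness hypothesis since $k-1 \leq m-1$.

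The hardest part will be making the reduction from filling a $k$-simplex in the quasicategory $\mathcal{C}$ to filling a $(k-1)$-simplex of a mapping space rigorous inside the quasi-category model: one needs the iterated-fibre description of $\mathcal{C}^{\Delta^k}$ in terms of objects, edges, and higher mapping spaces, and that this description transports along the categorical fibration $F$. Once this mapping-space description is in place, extensions over distinct nondegenerate $k$-simplices do not interfere, because each is attached along the already-fixed $(k-1)$-skeleton, so the induction terminates at $k = m$ and produces the desired lift $\tilde f\colon K \to \mathcal{C}$ in the homotopy category of quasicategories.
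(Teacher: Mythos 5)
Your outline follows the same strategy as the paper's proof: replace $F$ by a fibration, induct over the skeleta of $K$ using $\dim K \leq m$, and reduce the extension problem for each nondegenerate $k$-simplex to a connectivity statement about the map of mapping spaces from the initial to the final vertex. But the step you yourself flag as ``the hardest part'' --- rigorously identifying the relative lifting problem for $(\Delta^k,\partial\Delta^k)$ with a lifting problem in $\hom_{\mathcal{C}}(c_0,c_k) \to \hom_{\mathcal{D}}(Fc_0,Fc_k)$ --- is genuinely the crux of the proposition, and you have not supplied it; as it stands this is the gap. The paper resolves it by leaving the Joyal model: it replaces $\mathcal{C} \to \mathcal{D}$ by the homotopy coherent nerve of a Bergner fibration of fibrant simplicial categories $\mathscr{C} \to \mathscr{D}$, adjoints the square over to simplicial categories via $\mathfrak{C}$, and invokes the explicit pushout from the proof of \cite[Theorem 2.2.5.1]{highertopos}: $\mathfrak{C}[\Delta^k]$ is obtained from $\mathfrak{C}[\partial\Delta^k]$ by attaching a single ``cell'' $\mathfrak{D}[X_k] \to \mathfrak{D}[Y_k]$ in the mapping object from the initial to the terminal vertex, with $X_k$ of the homotopy type of $S^{k-2}$ and $Y_k$ contractible. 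If you want to stay entirely inside quasi-categories you need a substitute for this cellular description of mapping spaces; otherwise you should accept the detour through simplicial categories, which is exactly what makes the reduction you want a one-line consequence of an adjunction.

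Two smaller points. First, your obstruction is indexed off by one: filling a $(k-1)$-simplex of $\hom_{\mathcal{C}}(c_0,c_k)$ with prescribed boundary, compatibly over $\hom_{\mathcal{D}}(Fc_0,Fc_k)$, has its obstruction in $\pi_{k-2}$ of the homotopy fiber (the prescribed boundary is an $S^{k-2}$); it is $\pi_{k-1}$ of the fiber that acts on the set of fillers once one exists. Under the standard convention an $(m-1)$-connected map has $(m-2)$-connected homotopy fiber, so the obstruction group $\pi_{k-2}$ vanishes exactly when $k \leq m$; your two nonstandard conventions happen to cancel and the final range is right, but the justification ``$\pi_{k-1}$ vanishes since $k-1 \leq m-1$'' is not correct as written. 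Second, your treatment of the $0$-skeleton (essential surjectivity onto the image of $f$ together with the isofibration property of categorical fibrations) is fine, and is in fact spelled out more carefully than in the paper, which absorbs the $k=0$ case into the skeletal induction.
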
 
\begin{definition} 
\label{mconnecteddef}
For $m \geq 0$, we will say that a functor $F\colon \mathcal{C} \to \mathcal{D}$ is
\textbf{$m$-connected} if $F$ is essentially surjective and 
for each $c_1, c_2 \in\mathcal{C}$, the map
$F\colon \hom_{\mathcal{C}}(c_1, c_2) \to \hom_{\mathcal{D}}(Fc_1, Fc_2)$ is
$(m-1)$-connected.
\end{definition} 

\begin{proof} 
In view of the equivalence between quasicategories and simplicial categories
\cite[Theorem 2.2.5.1]{highertopos}, 
we can assume without loss of generality that the quasicategories
$\mathcal{C}, \mathcal{D}$ are
the homotopy coherent nerves of fibrant simplicial categories $\mathscr{C},
\mathscr{D}$ (using the model structure of Bergner \cite{bergner}). 
Moreover, we can assume that $F \colon \mathcal{C} \to \mathcal{D}$ is the
homotopy coherent nerve of a simplicially enriched functor $F \colon
\mathscr{C} \to \mathscr{D}$ which in turn is a fibration in the Bergner model
structure. In this case, we claim that any lifting problem 
\eqref{lift} can be solved in the category of simplicial sets itself.
Inducting over the skeleta, it now suffices to show that in any diagram
of simplicial sets, for $k \leq m$,
\[ \xymatrix{
\partial \Delta^k \ar[d] \ar[r] & \mathcal{C} \ar[d] \\
\Delta^k \ar[r] \ar@{-->}[ru] &  \mathcal{D}
},\]
there exists a lift.

In this case, we need to solve the
equivalent lifting problem (in simplicial categories) for 
\[ \xymatrix{
\mathfrak{C}[\partial \Delta^k] \ar[d] \ar[r] & \mathscr{C} \ar[d]  \\
\mathfrak{C}[\Delta^k] \ar[r] \ar@{-->}[ru]&  \mathscr{D}
},\]
where we have used the notation as in \cite[Def. 1.1.5.1]{highertopos}. The
equivalence 
follows from the definition of the homotopy coherent nerve as the right adjoint
to $\mathfrak{C}$.

However, we note that the simplicial functor $\mathfrak{C}[\partial \Delta^k] \to
\mathfrak{C}[\Delta^k]$ is obtained via a pushout along a cofibration in the
Bergner model structure. For a simplicial set 
$L$, let $\mathfrak{D}[L]$ denote the simplicial  category 
with two objects $a,b$ with $\hom(a,a) = \hom(b,b) = \Delta^0, \hom(a,b) = L,
\hom(b,a) = \emptyset$.  
Then there is a pushout diagram in simplicial categories
\[ \xymatrix{
\mathfrak{D}[ X_k] \ar[d] \ar[r] & \mathfrak{C}[\partial \Delta^k] \ar[d] \\
\mathfrak{D}[Y_k] \ar[r] &  \mathfrak{C}[ \Delta^k]
},\]
where $X_k$ has the homotopy type of $S^{k-2}$ and $Y_k$ the homotopy type of a
point; moreover, $X_k \to Y_k$ is a cofibration. For this, compare the discussion on \cite[p. 90]{highertopos} in the proof of
\cite[Theorem 2.2.5.1]{highertopos}.
Therefore, our lifting problem is equivalent to that given by the diagram
of simplicial categories
\begin{equation} \label{liftsimplcat} \xymatrix{
\mathfrak{D}[X_k] \ar[d] \ar[r] & \mathscr{C} \ar[d] \\
\mathfrak{D}[Y_k] \ar@{-->}[ru] \ar[r] &  \mathscr{D}
}.\end{equation}

Unwinding the definitions, it follows that our lifting problem 
\eqref{liftsimplcat}
is equivalent to 
a lifting problem in simplicial sets \[  \xymatrix{
X_k \ar[d]  \ar[r] &  \hom_{\mathscr{C}}(a,b)  \simeq
\hom_{\mathcal{C}}(a,b)\ar[d] \\
Y_k \ar[r] \ar@{-->}[ru] &  \hom_{\mathscr{D}}(a,b) \simeq
\hom_{\mathcal{D}}(a,b)
},\]
and we claim that we can solve this by the connectivity hypotheses.

In fact, 
suppose $U \to U'$ is a fibration of Kan complexes which is $(s-1)$-connected. 
Suppose $r \leq s-1$.
Then in any diagram of simplicial sets \[ \xymatrix{
X \ar[d] \ar[r] &  U \ar[d] \\
Y \ar[r] \ar@{-->}[ru] &  U'
},\]
with $X \to Y$ a cofibration such that $X$ has the homotopy type of $S^{r-1}$
and $Y$ has the homotopy type of $\ast$,
there exists a lift by obstruction theory. 
In our case, note that $\hom_{\mathscr{C}}(a, b) \to \hom_{\mathscr{D}}(a,b)$ is a Kan
fibration because $\mathscr{C} \to \mathscr{D} $ is a fibration of simplicial
categories.
\end{proof} 

\begin{remark} 
The simplicial sets that one encounters in the above argument will not all be
quasi-categories, even if $K$ is a quasi-category. It is a convenient feature of
the simplicial model that
one can still make ``cell'' decompositions in this way. 
\end{remark} 

In the rest of this section, we will note two simple tools for working with
the above ideas which we will use in the sequel. 

\begin{proposition} \label{dimcat} \label{dimension} Fix an integer $m$. 
Let $\mathcal{C}$ be an ordinary category with the property that whenever
$c_0, \dots, c_{m+1} $ are given objects in $ \mathcal{C}$ and  $c_i \to c_{i+1}$, for all $0\leq i \leq m$, are given morphisms, then (at least) one
of the morphisms is the identity. Then the nerve of $\mathcal{C}$ has dimension $ \leq
m$ as a simplicial set. 
\end{proposition}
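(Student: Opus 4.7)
The proof plan is to use the combinatorial description of the nerve and the standard characterization of dimension in terms of non-degenerate simplices. Recall that a simplicial set $X$ has dimension $\leq m$ if and only if every non-degenerate simplex of $X$ lives in degree $\leq m$; equivalently, every simplex of degree $> m$ is degenerate, since then the skeleton stabilizes at level $m$.

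First I would unwind what the non-degenerate simplices of the nerve $N\mathcal{C}$ are. A $k$-simplex of $N\mathcal{C}$ is by definition a chain of composable morphisms
\[  c_0 \xrightarrow{f_1} c_1 \xrightarrow{f_2} \cdots \xrightarrow{f_k} c_k , \]
and a standard computation (or direct inspection of the degeneracy maps $s_i$, which insert an identity at position $i$) shows that such a $k$-simplex is degenerate if and only if at least one of the $f_i$ is an identity morphism. Hence the non-degenerate $k$-simplices are precisely the chains of $k$ composable morphisms, none of which is an identity.

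Given this, the hypothesis of the proposition translates directly into the statement that $N\mathcal{C}$ has no non-degenerate $(m+1)$-simplex: any chain $c_0 \to c_1 \to \cdots \to c_{m+1}$ is required to have some $f_i$ equal to an identity. This means that the $(m+1)$-skeleton of $N\mathcal{C}$ equals its $m$-skeleton. The remaining step is to conclude that the same holds in every higher degree; this is automatic because given any chain of length $k > m$, any subchain of $m+1$ consecutive morphisms contains an identity (apply the hypothesis to $c_i, c_{i+1}, \dots, c_{i+m+1}$), so the whole $k$-simplex is degenerate.

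There is no real obstacle here: the proposition is essentially a restatement of the definition of ``non-degenerate simplex of a nerve'' once one observes that a composable chain is non-degenerate iff none of its arrows is an identity. The only thing worth being careful about is distinguishing between the $(m+1)$-skeleton condition and the full statement that \emph{all} higher simplices are degenerate, which is handled by the subchain remark above.
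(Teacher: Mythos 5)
Your proof is correct and follows essentially the same route as the paper: identify the non-degenerate simplices of $N\mathcal{C}$ with chains of composable morphisms none of which is an identity, and then observe that the hypothesis rules these out in degrees above $m$. The only difference is that you spell out the subchain argument for degrees strictly greater than $m+1$, which the paper leaves implicit.
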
 
For instance, if $\mathcal{C}$ is a poset, then it follows that the dimension
is one less than the maximum size of a \emph{chain.}
\begin{proof} 
In fact, every nondegenerate simplex in the nerve of $\mathcal{C}$ consists of
such a tuple of morphisms as above with none the identity, so that every nondegenerate simplex must
have dimension $\leq m$. 
\end{proof}

Finally, we include an auxiliary result that enables us to simplify the 
process of finding
liftings by leaving out certain parts of the category. 
\begin{proposition} 
\label{subposet}
Let $P$ be a poset and $P' \subset P$ a subposet such that no element of $P'$
is less than any element of $P \setminus P'$.
Let $\mathrm{Fun}'( N(P), \mathcal{S}_*)$ denote the full subcategory of
$\mathrm{Fun}(N(P), \mathcal{S}_*)$ spanned by those functors that send $P
\setminus P' $ to $\ast$. Then the restriction functor
\[  \mathrm{Fun}'( N(P), \mathcal{S}_*) \to \mathrm{Fun}( N(P'), \mathcal{S}_*) \]
is an equivalence of $\infty$-categories. 
\end{proposition}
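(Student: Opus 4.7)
The plan is to exhibit a quasi-inverse to the restriction functor by left Kan extension along the inclusion $i\colon N(P') \hookrightarrow N(P)$. Since $\mathcal{S}_*$ admits all small colimits, $\mathrm{Lan}_i F$ exists for every $F \in \mathrm{Fun}(N(P'), \mathcal{S}_*)$ by \cite[\S 4.3]{highertopos}, and it is computed pointwise as a colimit over a slice.

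First I would show that $\mathrm{Lan}_i F$ lands in $\mathrm{Fun}'(N(P), \mathcal{S}_*)$. For $q \in P \setminus P'$, the pointwise formula identifies $(\mathrm{Lan}_i F)(q)$ with the colimit of $F$ over $N(P') \times_{N(P)} N(P)_{/q}$. Since $P$ and $P'$ are posets, this comma $\infty$-category is (the nerve of) the set $\{p' \in P' : p' \leq q\}$. The hypothesis that no element of $P'$ is less than any element of $P \setminus P'$, combined with $P' \cap (P \setminus P') = \emptyset$, forces this set to be empty. The colimit of the empty diagram in the pointed $\infty$-category $\mathcal{S}_*$ is the initial (hence zero) object $\ast$, so $\mathrm{Lan}_i F$ sends $P \setminus P'$ to $\ast$, as required.

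Next I would verify that restriction and $\mathrm{Lan}_i$ are mutually quasi-inverse. Since $i$ is fully faithful, the unit $F \to (\mathrm{Lan}_i F)|_{N(P')}$ is an equivalence, handling one composite. For the other, given $G \in \mathrm{Fun}'(N(P), \mathcal{S}_*)$, the counit $\mathrm{Lan}_i(G|_{N(P')}) \to G$ is an equivalence on $N(P')$ by the unit property just noted, while on $P \setminus P'$ both source and target are $\ast$, so the counit is trivially an equivalence there.

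The one real observation being used is the direction of the Kan extension: left Kan extension at $q$ involves the condition $p' \leq q$, which is precisely what the hypothesis forbids for $q \in P \setminus P'$. A right Kan extension would instead involve $q \leq p'$, which is unconstrained and would not in general yield $\ast$. With that choice made, the proof is a routine application of the pointwise Kan extension formula for fully faithful inclusions.
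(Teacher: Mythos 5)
Your argument is correct, but it is genuinely different from the one in the paper. You produce an explicit quasi-inverse, namely left Kan extension along $i\colon N(P')\hookrightarrow N(P)$: the pointwise colimit formula shows that for $q\in P\setminus P'$ the relevant slice $\{p'\in P': p'\leq q\}$ is empty by hypothesis, so $(\mathrm{Lan}_i F)(q)$ is the initial, hence zero, object of $\mathcal{S}_*$; full faithfulness of $i$ then makes the unit an equivalence, and the counit is checked pointwise (tautologically on $P'$, and as a map of zero objects on $P\setminus P'$). The paper instead argues at the level of simplicial sets: it builds $N(P)$ from $N(P')$ by attaching simplices whose initial vertex lies in $P\setminus P'$, reduces by induction to the restriction $\mathrm{Fun}_{0\to\ast}(\Delta^n,\mathcal{S}_*)\to\mathrm{Fun}_{0\to\ast}(\partial\Delta^n,\mathcal{S}_*)$, and shows via the lifting property characterizing initial objects that this is a \emph{trivial fibration}. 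Your route is shorter and more conceptual, at the cost of invoking the machinery of pointwise Kan extensions from \cite[\S 4.3]{highertopos} (which the paper uses elsewhere anyway, so this is a mild cost); the paper's route is more hands-on and yields the slightly stronger, model-level conclusion that the restriction map is a trivial fibration rather than merely an equivalence. Your closing observation about the direction of the Kan extension correctly isolates where the hypothesis on $P'$ enters; note it plays exactly the dual role to the paper's observation that the attached simplices have \emph{initial} vertex in $P\setminus P'$.
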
 
\begin{proof} 
Observe that as a simplicial set, $N(P)$ is built from $N(P')$ by iteratively
attaching $n$-simplices $\Delta^n$ along the boundary $\partial \Delta^n$, where the initial vertex is necessarily in
$P \setminus P'$.
In other words, there exists a finite sequence of simplicial sets
\[ N(P') = K_0 \to K_1 \to \dots \to K_r = N(P),  \]
where
each map $K_i \to K_{i+1}$ fits into a pushout square
\[ \xymatrix{
\partial \Delta^{t(i)} \ar[d] \ar[r] &  \Delta^{t(i)} \ar[d]^{\phi_i} \\
K_i \ar[r] &  K_{i+1}
}.\]
Moreover, the vertex $0$ of each $\partial \Delta^{t(i)}$ is mapped to an
element of $P \setminus P'$. 
Note that $K_i \subset N(P)$, and we let $V_i \subset (K_i)_0$ denote the
collection of vertices in $K_i$ that belong to $ P \setminus P'$.
Note that $V_i \subset V_{i+1}$ with equality unless $t(i) = 0$, in which case
$V_{i+1}$ is the union of $V_i$ and the image of 
the map $\phi_i \colon \Delta^0 \to K_{i+1}$.

For each $i$, we let $\mathrm{Fun}'( K_i, \mathcal{S}_*)$ denote the
subcategory of $\mathrm{Fun}(K_i, \mathcal{S}_*)$ spanned by those functors
which carry every vertex in $V_i$ to $\ast$. 
We then 
have 
$\mathrm{Fun}'( K_0, \mathcal{S}_*) = \mathrm{Fun}( N(P'), \mathcal{S}_*)$ and  
$\mathrm{Fun}'(K_r, \mathcal{S}_*) = \mathrm{Fun}'( N(P), \mathcal{S}_*)$. 
For each $i$,
we have a 
pullback diagram of $\infty$-categories
\[ 
\xymatrix{
\mathrm{Fun}'(K_{i+1}, \mathcal{S}_*) \ar[d] \ar[r] & \mathrm{Fun}'( K_i,
\mathcal{S}_*) \ar[d] \\
\mathrm{Fun}_{0 \to \ast}( \Delta^{t(i)}, \mathcal{S}_*) \ar[r] & 
\mathrm{Fun}_{0 \to \ast}( \partial \Delta^{t(i)}, \mathcal{S}_*) 
},
\]
where the $0 \to \ast$ subscript indicates that the initial vertex is sent to
$\ast$.

By induction, it thus suffices to show that the forgetful functor
\[ \mathrm{Fun}_{0 \to \ast}(\Delta^n, \mathcal{S}_*) \to
\mathrm{Fun}_{0 \to \ast}(\partial
\Delta^n, \mathcal{S}_*)   \]
 is an equivalence of $\infty$-categories. This follows from the
next lemma. 
\end{proof}

\begin{lemma} 
Let $\mathcal{C}$ be a quasi-category containing an initial
object. Let $\mathrm{Fun}_{0 \to \ast}( \Delta^n, \mathcal{C})$ be the full
subcategory of the quasi-category $\mathrm{Fun}( \Delta^n, \mathcal{C})$ spanned by those functors
that send $0 \in \Delta^n$ to an initial object, and define 
$\mathrm{Fun}_{0 \to \ast}( \partial \Delta^n, \mathcal{C})$ similarly. Then 
the restriction map
$$\mathrm{Fun}_{0 \to \ast}( \Delta^n, \mathcal{C})\to \mathrm{Fun}_{0 \to \ast}( \partial \Delta^n, \mathcal{C})$$
is a trivial fibration of simplicial sets. 
\end{lemma}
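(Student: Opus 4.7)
The plan is to verify that $\rho: \mathrm{Fun}_{0 \to \ast}(\Delta^n, \mathcal{C}) \to \mathrm{Fun}_{0 \to \ast}(\partial \Delta^n, \mathcal{C})$ is a trivial fibration by establishing separately that it is a categorical fibration and a categorical equivalence. For the first, I would observe that $\mathrm{Fun}_{0\to\ast}(K, \mathcal{C})$ is the pullback of the evaluation map $\mathrm{ev}_0: \mathrm{Fun}(K, \mathcal{C}) \to \mathcal{C}$ along the inclusion $\mathcal{C}^{\mathrm{init}} \hookrightarrow \mathcal{C}$ of the full sub-quasi-category on initial objects. Since any two initial objects are canonically equivalent, $\mathcal{C}^{\mathrm{init}}$ is a contractible Kan complex. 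Hence $\rho$ is a base change of the categorical fibration $\mathrm{Fun}(\Delta^n, \mathcal{C}) \to \mathrm{Fun}(\partial \Delta^n, \mathcal{C})$ and is itself a categorical fibration.

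To prove $\rho$ is a categorical equivalence, I would reduce to a "strict" version by fixing a specific initial object $\ast$ and considering the strict fibers $\mathrm{Fun}^{\ast}(K, \mathcal{C})$ of $\mathrm{ev}_0$ over $\ast$. The inclusion $\{\ast\} \hookrightarrow \mathcal{C}^{\mathrm{init}}$ is a categorical equivalence of contractible Kan complexes, so base changing along it (using that $\mathrm{ev}_0$ is a categorical fibration) shows that $\mathrm{Fun}^{\ast}(K, \mathcal{C}) \hookrightarrow \mathrm{Fun}_{0\to\ast}(K, \mathcal{C})$ is a categorical equivalence for both $K = \Delta^n$ and $K = \partial \Delta^n$. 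It therefore suffices to show that $\rho^{\ast}: \mathrm{Fun}^{\ast}(\Delta^n, \mathcal{C}) \to \mathrm{Fun}^{\ast}(\partial \Delta^n, \mathcal{C})$ is a categorical equivalence. For this, I would invoke the characterization of initial objects (HTT 1.2.12.4, dualized): since $\ast$ is initial, $\mathcal{C}_{\ast/} \to \mathcal{C}$ is a trivial Kan fibration, hence so is $\mathrm{Fun}(L, \mathcal{C}_{\ast/}) \to \mathrm{Fun}(L, \mathcal{C})$ for every $L$. Using the join decompositions $\Delta^n = \{0\} \star \Delta^{n-1}$ and $\partial \Delta^n = \Delta^{n-1} \cup_{\partial \Delta^{n-1}} (\{0\} \star \partial \Delta^{n-1})$ together with the join-slice adjunction, one exhibits both $\mathrm{Fun}^{\ast}(\Delta^n, \mathcal{C})$ and $\mathrm{Fun}^{\ast}(\partial \Delta^n, \mathcal{C})$ as the total spaces of trivial fibrations over the common base $\mathrm{Fun}(\Delta^{n-1}, \mathcal{C})$ (via restriction to the face opposite vertex $0$). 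Since $\rho^{\ast}$ respects these projections, two-out-of-three yields that $\rho^{\ast}$ is a categorical equivalence.

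The main technical obstacle is making the join-slice identification precise at the level of simplicial sets, and not merely up to equivalence: the join-slice adjunction is a bijection of $0$-simplices $\mathrm{Hom}(K, \mathcal{C}_{\ast/}) \cong \mathrm{Hom}_{\ast}(\{0\} \star K, \mathcal{C})$, but $\Delta^k \times (\{0\} \star L) \not\cong \{0\} \star (\Delta^k \times L)$ when $k \geq 1$, so the identification of the relevant function complexes holds only up to categorical equivalence. One can circumvent this either by working throughout with the alternative cylinder slice $\mathcal{C}^{\ast/}$ (which is equivalent to $\mathcal{C}_{\ast/}$ by standard comparison of slice models) or, more directly, by presenting the required trivial fibrations as pullbacks along the trivial fibration $\mathcal{C}_{\ast/} \to \mathcal{C}$, thereby avoiding the need for any literal isomorphism of function complexes.
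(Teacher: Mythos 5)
Your proposal takes a genuinely different route from the paper, and it can probably be made to work, but it is considerably heavier and has two soft spots. The paper's proof is a bare-hands verification of the lifting property against $\partial \Delta^m \to \Delta^m$: by adjunction this becomes the problem of extending a map $(\partial \Delta^n \times \Delta^m) \cup_{\partial \Delta^n \times \partial \Delta^m} (\Delta^n \times \partial \Delta^m) \to \mathcal{C}$ over $\Delta^n \times \Delta^m$, and every nondegenerate simplex of $\Delta^n \times \Delta^m$ missing from that subcomplex has initial vertex $(0,0)$, which is sent to an initial object; the extension therefore exists cell by cell directly from the definition of an initial object, with the case $n=0$ being exactly the contractibility of the space of initial objects (HTT 1.2.12.9). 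Your reduction to ``categorical fibration plus categorical equivalence'' is legitimate, since the acyclic fibrations of the Joyal model structure are precisely the trivial Kan fibrations, and the fibration half is fine for $n \geq 1$ (for $n = 0$ the vertex $0$ does not lie in $\partial \Delta^0$, so that case must be handled separately, again via contractibility of $\mathcal{C}^{\mathrm{init}}$). In the equivalence half, two points need more care than you give them: first, the pullback of $\{\ast\} \hookrightarrow \mathcal{C}^{\mathrm{init}}$ along a categorical fibration is an equivalence not by right properness (the Joyal model structure is not right proper) but because all objects in sight are fibrant, so the Reedy/Ken Brown argument applies; second, the join--product mismatch you flag is the real crux, and neither proposed fix is yet a proof --- the fat join also fails to commute with $\Delta^k \times (-)$ on the nose (one must collapse $\Delta^k \times \{0\}$ and then invoke the comparison between the two joins and the two slices from HTT 4.2.1), while ``presenting the trivial fibrations as pullbacks of $\mathcal{C}_{\ast/} \to \mathcal{C}$'' is, as stated, exactly the identification you are trying to avoid. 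What the paper's approach buys is that none of this machinery is needed; what yours buys is a conceptual explanation at the level of statements (the space of initial objects is contractible, and slices under initial objects are trivial over $\mathcal{C}$), at the cost of the slice/join comparison apparatus. If you do push your route through by unwinding the required extension properties, you will find yourself essentially reproducing the paper's cell-attachment argument.
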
 
\begin{proof} 
For $n = 0$, this assertion states that the subcategory of $\mathcal{C}$
spanned by the initial objects is a contractible Kan complex, which is proved
in \cite[Proposition 1.2.12.9]{highertopos}. We thus assume $n \geq 1$. 
We need to show that any lifting problem
\[ \xymatrix{
\partial \Delta^m \ar[d] \ar[r] &  \mathrm{Fun}_{0 \to \ast}( \Delta^n,
\mathcal{C}) \ar[d]  \\
\Delta^m \ar[r] \ar@{-->}[ru] &  \mathrm{Fun}_{0 \to \ast}( \partial \Delta^n,
\mathcal{C})
},\]
can be solved. This lifting problem is equivalent to one of the form
\begin{equation}
\label{secondlift}
\xymatrix{
\partial \Delta^n \times \Delta^m \sqcup_{\partial \Delta^n \times \partial
\Delta^m} \Delta^n \times \partial \Delta^m \ar[d] \ar[r] &  \mathcal{C} \\
\Delta^n \times \Delta^m \ar@{-->}[ru]
}
.\end{equation}
The given 
map
$\partial \Delta^n \times \Delta^m \sqcup_{\partial \Delta^n \times \partial
\Delta^m} \Delta^n \times \partial \Delta^m \to \mathcal{C}$ carries every vertex $(0, i)$ (for
$i \in \left\{0, 1, \dots, m\right\}$) to an initial object of $\mathcal{C}$. 
Moreover, the inclusion of simplicial sets
$\partial \Delta^n \times \Delta^m \sqcup_{\partial \Delta^n \times \partial
\Delta^m} \Delta^n \times \partial \Delta^m \to \Delta^n \times \Delta^m$ is
built up by successively attaching simplices $\Delta^p$, along the boundary
$\partial \Delta^p$, whose initial
vertex is $(0, 0)$. 
In particular, to find the lift in 
\eqref{secondlift}, it suffices to contemplate a problem of the form
\[ \xymatrix{
\partial \Delta^p \ar[d] \ar[r] & \mathcal{C} \\
\Delta^p  \ar@{-->}[ru]
},\]
where the initial vertex of $\partial \Delta^p$ maps to an initial object of
$\mathcal{C}$. But by the definition of an initial object of a quasi-category
\cite[Definition 1.2.12.3]{highertopos}, such lifting problems can always be
solved. 
\end{proof}

\section{Specialization to totalizations}

In this section, we will prove our main result on $k$-fold deloopability for
totalizations (which is a restatement of \Cref{maintheorem}). 

\begin{theorem} 
\label{kdeloopable}
Let $1 \leq n \leq m \leq 2n+1$.  Then $\Delta^{\leq
n} \subset \Delta^{\leq m}$ is $(2n-m+2)$-fold deloopable. \end{theorem}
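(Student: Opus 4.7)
My plan is to apply the criterion from \thref{sec:deloop} identifying $r$-fold deloopability of $\mathcal{C}\subset\mathcal{D}$ with the existence of a functorial $r$-fold desuspension of the functor $T_{\mathcal{C},\mathcal{D}}\colon\mathcal{D}\to\mathcal{S}_*$ from \eqref{T}, and then to construct such a desuspension by the obstruction theory of \thref{lifting}. Throughout I write $r = 2n-m+2$.

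First, by \thref{preimagedeloop} applied to the (right cofinal) inclusion $\Delta^{\mathrm{inj},\leq m}\hookrightarrow\Delta^{\leq m}$, it suffices to show that $\Delta^{\mathrm{inj},\leq n}\subset\Delta^{\mathrm{inj},\leq m}$ is $r$-fold deloopable. The gain is that $N(\Delta^{\mathrm{inj},\leq m})$ is finite-dimensional, of dimension at most $m$, by \thref{dimcat}. I next identify $T = T_{\Delta^{\mathrm{inj},\leq n},\Delta^{\mathrm{inj},\leq m}}$ objectwise: for $p\leq n$ the overcategory $(\Delta^{\mathrm{inj},\leq n})_{/[p]}$ has a terminal object, so $T([p])\simeq\ast$; for $n < p \leq m$ this overcategory is the poset of nonempty subsets of $\{0,\ldots,p\}$ of cardinality at most $n+1$, whose nerve is the barycentric subdivision of the $n$-skeleton of $\Delta^p$ and so is equivalent to $\bigvee_{\binom{p}{n+1}} S^n$. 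Taking unreduced suspensions gives $T([p]) \simeq \bigvee S^{n+1}$ in this range.

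Because $T$ is trivial on every $[p]$ with $p\leq n$ and no morphism of $\Delta^{\mathrm{inj}}$ can decrease $p$, the proof of \thref{subposet} adapts verbatim to this (nearly-poset) setting, allowing me to restrict attention to the full subcategory $\mathcal{D}''\subset\Delta^{\mathrm{inj},\leq m}$ on objects $[p]$ with $n<p\leq m$; its nerve has dimension at most $m-n-1$, realized by the chain $[n+1]\hookrightarrow\cdots\hookrightarrow[m]$. Let $\mathcal{A},\mathcal{B}\subset\mathcal{S}_*$ denote the full subcategories on finite wedges of $(m-n-1)$-spheres and of $(n+1)$-spheres respectively. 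Since $(m-n-1) + r = n+1$, the iterated suspension restricts to an essentially surjective functor $\Sigma^r\colon\mathcal{A}\to\mathcal{B}$, and $T|_{\mathcal{D}''}$ factors through $\mathcal{B}$. The problem reduces to lifting $T|_{\mathcal{D}''}$ along $\Sigma^r\colon\mathcal{A}\to\mathcal{B}$.

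The lift is produced by \thref{lifting}: the source simplicial set has dimension at most $m-n-1$, so it is enough that $\Sigma^r$ be $(m-n-2)$-connected on hom spaces. For $X,Y\in\mathcal{A}$ the Freudenthal suspension theorem shows that $\Sigma\colon\mathrm{Map}_*(X,Y)\to\mathrm{Map}_*(\Sigma X,\Sigma Y)$ is exactly $(2(m-n-2)+1-(m-n-1)) = (m-n-2)$-connected, and iterating $r$ times only improves this estimate. The delicate point is precisely this numerical coincidence between Freudenthal's bound $(m-n-2)$ and the combinatorial dimension $(m-n-1)$ of $N(\mathcal{D}'')$: together with the requirement $r\geq 1$, it is exactly what imposes the hypothesis $n\leq m\leq 2n+1$.
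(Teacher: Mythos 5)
There is a genuine gap at the very first step. You invoke \Cref{preimagedeloop} for the inclusion $\Delta^{\mathrm{inj},\leq m}\hookrightarrow \Delta^{\leq m}$, asserting that it is right cofinal; it is not, and this is precisely the technical point that the reduction to a finite-dimensional indexing category has to negotiate. While the untruncated inclusion $\Delta^{\mathrm{inj}}\subset\Delta$ is right cofinal, the truncated one fails to be: a right cofinal functor of $\infty$-categories is in particular a weak homotopy equivalence of nerves, and $N(\Delta^{\leq m})$ is contractible (the object $[0]$ is terminal, since the codegeneracies are still present), whereas $N(\Delta^{\mathrm{inj},\leq m})$ is not --- for $m=1$ it is the nerve of the category with objects $[0],[1]$ and two non-identity arrows, i.e.\ a circle. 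Concretely, the limit over $\Delta^{\mathrm{inj},\leq m}$ is a ``fat'' partial totalization that genuinely differs from $\mathrm{Tot}_m$: for the constant cosimplicial space at $Y$ and $m=1$, the limit over $\Delta^{\leq 1}$ is $Y$ while the limit over $\Delta^{\mathrm{inj},\leq 1}$ is the free loop space of $Y$. Consequently the deloopability of $\Delta^{\mathrm{inj},\leq n}\subset\Delta^{\mathrm{inj},\leq m}$, which is what the remainder of your argument addresses, does not transfer to the stated claim about $\mathrm{Tot}_m\to\mathrm{Tot}_n$. The correct substitute is the overcategory $\Delta^{\mathrm{inj},\leq m}_{/[m]}$, equivalent to the poset $\mathcal{P}([m])$ of nonempty subsets of $[m]$: the composite $\Delta^{\mathrm{inj},\leq m}_{/[m]}\to\Delta^{\leq m}$ \emph{is} right cofinal (\Cref{dln}, a folklore but nontrivial fact), and the preimage of $\Delta^{\leq n}$ under it is $\mathcal{P}_{\leq n+1}([m])$.

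Once that substitution is made, the rest of your argument is essentially the paper's proof of \Cref{powsetdeloop}, with matching numerology: the left Kan extension of a point is pointwise the barycentric subdivision of the $n$-skeleton of a simplex, hence a wedge of $n$-spheres, so $T$ is pointwise a wedge of $(n+1)$-spheres vanishing on subsets of cardinality $\leq n+1$; \Cref{subposet} (now applied to an honest poset, so the ``adapts verbatim'' step you gesture at for the non-poset $\Delta^{\mathrm{inj},\leq m}$ is not needed) reduces the desuspension problem to the $(m-n-1)$-dimensional subposet of subsets of cardinality $>n+1$; and your Freudenthal estimate for $\Sigma^r$ restricted to finite wedges of $(m-n-1)$-spheres agrees with the paper's and meshes with \Cref{lifting} exactly as you say. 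So the error is localized to the cofinality claim, but as written the proof establishes a statement about the wrong homotopy limits.
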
 

In order to prove \Cref{kdeloopable}, we will follow the outline already laid
out in the previous sections. Namely, we will write down the functor $T$ introduced in \Cref{sec:deloop} explicitly;
it will be pointwise a wedge of spheres. Then we will show, using the obstruction
theory, that it can be desuspended the right number of times.

Our first problem is, however, that that the nerve $N( \dln)$ is not a
finite-dimensional simplicial set, and it will be convenient to replace it by
one.

\begin{definition} 
As before, let $\dl$ be the simplex category, of finite nonempty totally 
ordered sets and order-preserving maps between them. We let $\dli \subset \dl$ be the
subcategory with the same objects but \emph{injective} order-preserving maps. 
We let $\dlin$ and $\dln$ denote the analogs where we only consider 
totally ordered sets of cardinality $\leq n+1$. 
\end{definition} 

It is classical that the inclusion $\dli \subset \dl$
is (homotopy) right cofinal; see for instance \cite[Example 8.5.12]{riehl}. 
The advantage of the category $\dli$ is that the nerve of a skeleton of $\dlin$ is
$n$-dimensional, as any composable sequence $x_0 \to x_1 \to \dots \to x_{n+1}$ of arrows in a skeleton
of $\dlin$ contains
an identity. (Compare \Cref{dimcat}.)
However, the inclusion $\dlin \subset \dln$ is no longer homotopy final: 
$\dln$ is weakly contractible (as it has a terminal object) while $\dlin$
fails to be weakly contractible (for instance for $n = 1$). 
Recall that any right cofinal map of quasi-categories is in particular a weak
homotopy equivalence of underlying simplicial sets 
\cite[Proposition 4.1.1.3]{highertopos}. 

Let $\dlin_{/[n]}$ be the overcategory; it is equivalent to the partially ordered set of all
nonempty subsets of $[n] = \left\{0, 1, \dots, n\right\}$. We have a natural
functor $\dlin_{/[n]} \to \dlin_{} \to \dln$. 
Now we use:

\begin{proposition}[{}]
\label{dln}
The composite $\dlin_{/[n]} \to \dln$ is right cofinal. 
\end{proposition}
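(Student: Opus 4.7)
The plan is to identify the relevant overcategory concretely and then verify its weak contractibility. Unwinding the fiber product $\dlin_{/[n]} \times_{\dln} (\dln)_{/[k]}$, an object consists of an injection $[a] \hookrightarrow [n]$ (identified with a nonempty subset $S \subseteq [n]$) together with an order-preserving map $g \colon [a] \to [k]$; under the identification, this is an order-preserving map $g \colon S \to [k]$. A morphism $(S, g) \to (S', g')$ is an inclusion $S \subseteq S'$ with $g'|_S = g$. So the overcategory at $[k]$ is the poset $\mathcal{E}_k$ of monotone partial maps $[n] \rightharpoonup [k]$ with nonempty domain, ordered by extension. By the criterion for right cofinality (dual of \cite[Theorem 4.1.3.1]{highertopos}), the proposition reduces to showing $|N(\mathcal{E}_k)|$ is weakly contractible for each $0 \leq k \leq n$.

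To prove contractibility, I would first reformulate $\mathcal{E}_k$ geometrically as the face poset of the subcomplex $Y_k \subseteq \Delta^n \times \Delta^k$ spanned by the non-degenerate simplices whose projection to $\Delta^n$ is injective: equivalently $Y_k = \bigcup_{f} \sigma_f$, where $f$ ranges over monotone maps $[n] \to [k]$ and $\sigma_f$ is the $n$-simplex with vertex set $\{(i, f(i))\}_{i=0}^n$. Since $|N(\mathcal{E}_k)| \simeq |Y_k|$, it suffices to show $Y_k$ is contractible. I would attempt this by induction on $n$. Split $\mathcal{E}_k$ according to whether $0 \in S$: the ``out'' subposet ($0 \notin S$) identifies with the analogous category over $\{1, \dots, n\} \cong [n-1]$ and is contractible by the inductive hypothesis (at least when $k \leq n-1$); the ``in'' subposet ($0 \in S$) decomposes as $\bigsqcup_{j \in [k]} \mathcal{E}_k^{\mathrm{in}, j}$ according to the value $g(0) = j$, and each component has $(\{0\}, 0 \mapsto j)$ as an initial element, hence contractible. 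The ``in, $j$'' component attaches to the ``out'' part along a subposet of the form $\{(S, g) \in \mathcal{E}_k^{[n-1]} : g(\min S) \geq j\}$, which is again of $\mathcal{E}_{\cdot}^{[n-1]}$-type and inductively contractible; a Mayer--Vietoris argument then assembles a contractible $\mathcal{E}_k$.

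The main obstacle will be the edge case $k = n$, where the ``out'' part is $\mathcal{E}_n$ over $[n-1]$ and so $k > n - 1$ lies outside the direct inductive range. I expect to handle this either by strengthening the inductive hypothesis to a joint statement on $(n, k)$ with $k \leq n$ and inducting on $n$ with an auxiliary step for $k = n$, or by using the dual decomposition (splitting on $n \in S$ and exploiting the shift $[k-1] \hookrightarrow [k]$), or most cleanly by giving a direct discrete-Morse collapse argument on $Y_k$ exhibiting it as elementarily collapsible to a single vertex.
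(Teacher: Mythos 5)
Your reduction is the right first move, and it is worth noting that the paper itself offers no proof of this proposition at all: it calls the statement ``folklore but technical'' and cites \cite[Proposition 18.7]{duggercolim} and \cite[Lemma 1.2.4.17]{higheralg}, so a self-contained argument would be genuinely additional content. Your unwinding of the comma category is correct: by the dual of \cite[Theorem 4.1.3.1]{highertopos} the claim is equivalent to the weak contractibility, for each $0 \leq k \leq n$, of the poset $\mathcal{E}_k$ of monotone partial maps $[n] \rightharpoonup [k]$ with nonempty domain, and your identification of $|N(\mathcal{E}_k)|$ with the subcomplex $Y_k \subseteq \Delta^n \times \Delta^k$ is also correct. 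The gaps are in the contractibility argument. First, the partition of $\mathcal{E}_k$ into the lower set ``out'' ($0 \notin S$) and the upper set ``in'' ($0 \in S$) does \emph{not} decompose $N(\mathcal{E}_k)$ as $N(\mathrm{out}) \cup N(\mathrm{in})$: a chain may begin in ``out'' and end in ``in'', and such a chain is a simplex lying in neither subcomplex, so there is no Mayer--Vietoris sequence for that pair as written. What is true is that $|N(\mathcal{E}_k)|$ is the double mapping cylinder (homotopy pushout) of $|N(\mathrm{out})| \leftarrow |N(Q)| \to |N(\mathrm{in})|$, where $Q$ is the poset of comparable pairs $\{(x,y): x \in \mathrm{out},\, y \in \mathrm{in},\, x \leq y\}$; this decomposition is a real lemma, and one then needs Quillen's Theorem A to identify each component $Q_j$ (where $g'(0)=j$) with your subposet $R_j = \{(S,g): g(\min S) \geq j\} \cong \mathcal{E}_{k-j}^{[n-1]}$, using that the fibers of $Q_j \to R_j$ over principal up-sets have minimum elements. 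None of this is fatal, but the word ``Mayer--Vietoris'' does not supply it.

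Second, and more seriously, the $k=n$ edge case is not a bookkeeping issue that a strengthened inductive hypothesis can absorb: the poset $\mathcal{E}_n^{[n-1]}$ is genuinely \emph{not} contractible, so your first proposed fix cannot work and the other two are unspecified. Concretely, $\mathcal{E}_2^{[1]}$ is the graph $Y_2 \subseteq \Delta^1 \times \Delta^2$ with six vertices and six edges and no $2$-cells; it is connected with Euler characteristic $0$ and contains the $4$-cycle $(0,0)$--$(1,1)$--$(0,1)$--$(1,2)$--$(0,0)$, so $H_1 \neq 0$. The statement ``$\mathcal{E}_k^{[m]}$ is contractible'' simply fails outside the range $k \leq m$. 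The induction can nevertheless be closed, but by a different observation: in the homotopy pushout above, the $j=0$ component $Q_0 \to \mathrm{out}$ is always a weak equivalence (every $(S,g)$ with $0 \notin S$ extends by $0 \mapsto 0$, and the fibers over principal up-sets have minima), so the pushout cones off the entire ``out'' piece onto the vertex $(\{0\}, 0 \mapsto 0)$ \emph{regardless of whether ``out'' is contractible}; the remaining components $Q_j$ with $j \geq 1$ are equivalent to $\mathcal{E}_{k-j}^{[n-1]}$ with $k-j \leq n-1$, which the inductive hypothesis does cover. With that observation the induction works for all $k \leq n$, but as written your proposal stops short precisely at the step that makes this proposition ``technical.''
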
 
References for \Cref{dln}, which is folklore but technical,  include
\cite[Proposition 18.7]{duggercolim} and 
\cite[Lemma 1.2.4.17]{higheralg}. 

We will thus work with the $\dlin_{/[n]}$. 
Using \Cref{preimagedeloop}, it suffices to prove an analog of
$k$-fold deloopability here; the advantage is that, by \Cref{dimension}, the nerve
has dimension $n$ (up to taking a skeleton). 

First, we will change notation. 
Let $S$ be a finite, nonempty set (e.g., $[n]$). We let $\mathcal{P}(S)$
denote the partially ordered set of all nonempty subsets of $S$, which by
abuse of notation we will identify with its nerve. 
Let $\mathcal{P}_{\leq r}(S) \subset \mathcal{P}(S)$ be the partially ordered
subset of all nonempty subsets of $S$ of cardinality $\leq r$. 
It follows by \Cref{preimagedeloop} and \Cref{dln} that our main result,
\Cref{kdeloopable}, is a corollary (with $r = n+1, S = [m]$, $|S| = m+1$) of:

\begin{proposition} 
\label{powsetdeloop}
$\mathcal{P}_{\leq r}(S) \subset \mathcal{P}(S)$ is $(2r -
|S|+1)$-fold deloopable. 
\end{proposition}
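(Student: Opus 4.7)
The plan is to verify the criterion characterizing $k$-fold deloopability via desuspensions of the functor $T = T_{\mathcal{P}_{\leq r}(S), \mathcal{P}(S)}\colon \mathcal{P}(S) \to \mathcal{S}_*$ from \eqref{T}: it suffices to produce a functorial $k$-fold desuspension $T \simeq \Sigma^k T'$, where $k = 2r - |S| + 1$. The first order of business is to describe $T$ explicitly. By the remark following \eqref{T}, $T(d)$ is the (pointed) unreduced suspension of $N(\mathcal{P}_{\leq r}(d))$, the order complex of nonempty subsets of $d$ of cardinality $\leq r$. For $|d| \leq r$, $\mathcal{P}_{\leq r}(d) = \mathcal{P}(d)$ has a terminal object (namely $d$ itself), so its nerve is contractible and $T(d) \simeq \ast$. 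For $|d| > r$, $N(\mathcal{P}_{\leq r}(d))$ is the barycentric subdivision of the $(r-1)$-skeleton of the standard simplex on $d$ and is therefore homotopy equivalent to a wedge of $(r-1)$-spheres, so $T(d)$ has the homotopy type of a wedge of $r$-spheres.

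Next I would apply \Cref{subposet} with $P = \mathcal{P}(S)$ and $P' = \mathcal{P}_{>r}(S)$: no subset of $S$ of cardinality greater than $r$ is contained in one of cardinality $\leq r$, so the hypothesis is satisfied, and since $T$ vanishes on $\mathcal{P}(S) \setminus P' = \mathcal{P}_{\leq r}(S)$, the functor $T$ is determined by its restriction $T|_{P'}\colon P' \to \mathcal{S}_*$. Any functorial desuspension of $T|_{P'}$ extends (by the constant $\ast$) to one of $T$, so it suffices to desuspend $T|_{P'}$ $k$-fold. The payoff of this reduction is a sharp dimension estimate: the longest chain in $P'$ runs from an $(r+1)$-element subset up to $S$ and has length $|S| - r$, so by \Cref{dimension} we have $\dim N(P') = |S| - r - 1$.

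The iterated desuspension of $T|_{P'}$ proceeds via the obstruction theory of \Cref{lifting}. Let $\mathcal{W}_p \subset \mathcal{S}_*$ be the full subcategory of spaces weakly equivalent to a wedge of $p$-spheres; the suspension functor restricts to an essentially surjective $\Sigma\colon \mathcal{W}_{p-1} \to \mathcal{W}_p$. For $X, Y \in \mathcal{W}_{p-1}$, $Y$ is $(p-2)$-connected, so by Freudenthal the unit $Y \to \Omega \Sigma Y$ is a $(2p-3)$-equivalence; since $X$ has CW dimension $p-1$, the induced map $\hom_{\mathcal{S}_*}(X,Y) \to \hom_{\mathcal{S}_*}(\Sigma X, \Sigma Y)$ is a $(p-2)$-equivalence. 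Thus $\Sigma\colon \mathcal{W}_{p-1} \to \mathcal{W}_p$ is $(p-1)$-connected in the sense of \Cref{mconnecteddef}. Setting $T_0 = T|_{P'}\colon P' \to \mathcal{W}_r$ and iterating, at step $i+1$ I would apply \Cref{lifting} with $F = \Sigma\colon \mathcal{W}_{r-i-1} \to \mathcal{W}_{r-i}$ and $K = N(P')$; the connectivity hypothesis becomes $\dim N(P') \leq r-i-1$, i.e.\ $|S| - r - 1 \leq r - i - 1$, which holds precisely for $i \leq k-1$. So all $k$ successive desuspensions exist, yielding $T_k\colon P' \to \mathcal{W}_{r-k}$ with $\Sigma^k T_k \simeq T|_{P'}$; extending back to $\mathcal{P}(S)$ via \Cref{subposet} produces the desired functorial $k$-fold desuspension of $T$.

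The main delicate point is that the Freudenthal bound $(p-2)$-equivalence and the dimension $|S|-r-1$ of $N(P')$ are matched exactly by the hypothesis $|S| \leq 2r$: the reduction to $P'$ via \Cref{subposet} is essential here, since the naive nerve of $\mathcal{P}(S)$ has dimension $|S|-1$, which is too large by $r$ and would only afford $k-r$ desuspensions by this approach.
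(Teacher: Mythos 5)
Your proof is correct and follows essentially the same route as the paper: identify $T$ pointwise as a wedge of $r$-spheres via the unreduced suspension of $N(\mathcal{P}_{\leq r}(U))$, restrict to $\mathcal{P}_{>r}(S)$ using \Cref{subposet}, bound $\dim N(\mathcal{P}_{>r}(S)) = |S|-r-1$, and apply Freudenthal plus \Cref{lifting}. The only (immaterial) difference is that you desuspend one step at a time through the categories of wedges of spheres, whereas the paper lifts along the single functor $\Sigma^d\colon \mathcal{C}(r-d)\to\mathcal{C}(r)$ in one application of \Cref{lifting}; the connectivity and dimension bounds work out identically.
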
 

\begin{proof}
The left Kan extension
\( \mathrm{Lan}_{\mathcal{P}_{\leq r}(S) \to \mathcal{P}(S)}(\ast)  \)
can be described explicitly by the following formula: it assigns to an
arbitrary nonempty subset 
$U \subset S$ the nerve of the partially ordered set $\mathcal{P}_{\leq r}(U)$ of all nonempty subsets of
$U$ of cardinality $\leq r$. 
In general, for any finite set $U$, the nerve of $\mathcal{P}_{\leq r}(U)$ is isomorphic to the barycentric subdivision of the
simplicial \emph{complex} obtained as the union of all the $(r-1)$-simplices of an
$(|U|-1)$-simplex. This has the homotopy type of a wedge of $(r-1)$-spheres,
as one sees by comparing the simplicial homology to that of the standard
simplex. 
It follows that the functor
$$T = T_{\mathcal{P}_{\leq r}(S) \to \mathcal{P}(S)}\colon \mathcal{P}(S) \to
\mathcal{S}_*$$ is, pointwise,
a wedge of $r$-spheres: it is the unreduced suspension of the left Kan
extension $\mathrm{Lan}_{\mathcal{P}_{\leq r}(S) \to \mathcal{P}(S)}(\ast)$. 
Moreover, the functor 
$T$ is weakly contractible when evaluated on any subset of $S$ of cardinality $\leq r$. 

Our goal is to functorially desuspend $T$. 
Consider the subset $\mathcal{P}_{>r}(S)$ of the poset $\mathcal{P}(S)$ where we consider
subsets of $S$ of cardinality $> r$. 
By \Cref{subposet}, it is equivalent to desuspend $T|_{\mathcal{P}_{>r}(S)}$. 
By \Cref{dimension}, 
the nerve of $\mathcal{P}_{>r}(S)$ has dimension $|S| - r-1$. 

Let $d> 0$. 
We are thus contemplating 
the lifting problem
\[ \xymatrix{
& \mathcal{S}_* \ar[d]^{\Sigma^d} \\
\mathcal{P}_{>r}(S) \ar@{-->}[ru]\ar[r]^T & \mathcal{S}_*
}.\]

We do not need to work with all of $\mathcal{S}_*$ for this. For each $p$, 
consider the $\infty$-category $\mathcal{C}(p) \subset \mathcal{S}_*$ which is
the full subcategory spanned by the
finite wedges of
$p$-spheres. 
One has a functor
\[ \Sigma\colon \mathcal{C}(p) \to \mathcal{C}(p+1),  \]
and our problem is to find a lift
\[ \xymatrix{
& \mathcal{C}(r-d)\ar[d]^{\Sigma^d} \\
\mathcal{P}_{>r}(S) \ar@{-->}[ru]\ar[r]^T &\mathcal{C}(r)
}.\]

Observe that, for any $p$, the maps
given by $\Sigma$,
\[ \hom_*(S^p \vee \dots \vee S^p, S^p \vee \dots \vee S^p) \to
\hom_*(S^{p+1} \vee \dots \vee S^{p+1}, S^{p+1} \vee \dots
\vee S^{p+1}),  \]
are $(p-1)$-connected by the Freudenthal suspension theorem. 
Therefore, the functor $\Sigma^d\colon \mathcal{C}(r-d) \to \mathcal{C}(r)$ above is $ (r-d)$-connected in the sense of
\Cref{mconnecteddef}.
By
\Cref{lifting}, we can find a lift if 
\[ \dim N(\mathcal{P}_{>r}(S)) = |S| - r-1 \leq  r-d,  \]
or equivalently if 
\[ d \leq 2r - |S|+1,  \]
as desired. 
In particular, we can desuspend $T$ by $2r - |S|+1$ times, which proves that the
inclusion in question is $(2r - |S| + 1)$-fold deloopable. 
\end{proof}

\begin{remark} 
By \Cref{kdeloopable}, we find that if $X_\bullet$ is any pointed simplicial space, then the
cofiber of the geometric realization of the $m$-truncation by the geometric
realization of the $n$-truncation is a $k$-fold suspension, for $k  =2n - m +
2$. This cannot be improved, since (using the standard simplicial
model given by the bar construction) $\mathbb{RP}^m/\mathbb{RP}^n$ cannot be a
suspension if $m \geq 2n+2$ and nontrivial cup products exist. In particular, \Cref{kdeloopable} is best possible (at least for one-fold
deloopings). 
\end{remark} 

It will actually be convenient to have a slightly stronger result. 
\begin{corollary} \label{strongdeloop}
Let $1 \leq n \leq m \leq 2n+1$ and let $k = 2n-m+2$ and let $\mathcal{C}$ be
any pointed $\infty$-category with limits. 
Then for $X^\bullet \in \fun( \Delta, \mathcal{C})$, the sequence of objects
\[ \mathrm{fib}\left(\mathrm{Tot}_m(X^\bullet) \to
\mathrm{Tot}_n(X^\bullet)\right) \to 
 \mathrm{fib}\left(\mathrm{Tot}_{m-1}(X^\bullet) \to
 \mathrm{Tot}_n(X^\bullet)\right)  
 \to \dots \to \ast
\]
is a functor of $\Omega^k X^\bullet$. 
\end{corollary}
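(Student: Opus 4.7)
The plan is to reduce the statement to \Cref{kdeloopable} applied in the auxiliary tower-valued category $\mathcal{A} := \fun([m-n]^{\mathrm{op}}, \mathcal{C})$: we will package the entire tower of fibers as a single fiber of an $\mathcal{A}$-valued totalization. The input will be a cosimplicial object $\tilde X^\bullet$ in $\mathcal{A}$ built from $X^\bullet$ by right Kan extensions, so that its formation automatically commutes with $\Omega^k$.

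Concretely, for each $j$ with $n \leq j \leq m$ we set
\[ \tilde X_j^\bullet := \mathrm{Ran}_{\Delta^{\leq j} \hookrightarrow \Delta^{\leq m}}\bigl(X^\bullet|_{\Delta^{\leq j}}\bigr) \in \fun(\Delta^{\leq m}, \mathcal{C}). \]
The counits of the Ran-adjunctions assemble these into a tower $\tilde X_m^\bullet \to \tilde X_{m-1}^\bullet \to \dots \to \tilde X_n^\bullet$, equivalently a single object $\tilde X^\bullet \in \fun(\Delta^{\leq m}, \mathcal{A})$ whose value at $[i]$ is the tower $j \mapsto \tilde X_j^i$. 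Because Ran is a pointwise limit, $\Omega^k \tilde X^\bullet$ agrees with the analogous Ran-tower construction performed on $\Omega^k X^\bullet$.

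Next I would verify the fiber calculation. The defining property of Ran gives $\varprojlim_{\Delta^{\leq m}} \tilde X_j^\bullet = \varprojlim_{\Delta^{\leq j}} X^\bullet|_{\Delta^{\leq j}} = \Tot_j X^\bullet$. Since $\Delta^{\leq n} \subseteq \Delta^{\leq j}$ for every $j \geq n$, the restriction $\tilde X_j^\bullet|_{\Delta^{\leq n}}$ recovers $X^\bullet|_{\Delta^{\leq n}}$, so $\varprojlim_{\Delta^{\leq n}} \tilde X_j^\bullet|_{\Delta^{\leq n}} = \Tot_n X^\bullet$ uniformly in $j$. Computing fibers levelwise in $\mathcal{A}$ therefore identifies $\mathrm{fib}\bigl(\Tot_m \tilde X^\bullet \to \Tot_n \tilde X^\bullet\bigr)$ with exactly the desired tower $F_m \to F_{m-1} \to \dots \to F_n = \ast$, with transition maps coming from the Ran structure. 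By the $\mathcal{A}$-valued generalization of $k$-fold deloopability established in \Cref{sec:deloop}, applied to the $k$-fold deloopable inclusion $\Delta^{\leq n} \subset \Delta^{\leq m}$, this fiber can be written as $\mathscr{G}(\Omega^k \tilde X^\bullet)$ for some limit-preserving $\mathscr{G}$; combined with the preceding commutation this exhibits the tower as $\mathscr{G}$ applied to the Ran-tower of $\Omega^k X^\bullet$, i.e.\ as a functor of $\Omega^k X^\bullet$.

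The main obstacle is selecting the right auxiliary object $\tilde X^\bullet$: naive ``truncation by $\ast$'' violates the cosimplicial identities involving codegeneracies, and restricting to $\Delta^{\leq j}$ does not place the different $\tilde X_j^\bullet$ into a common diagram category. Right Kan extension resolves both issues simultaneously, being automatically functorial as a limit, invisible on $\Delta^{\leq n}$ so that $\Tot_n$ is preserved, and adjusting $\Tot_m$ to $\Tot_j$ in exactly the way needed to align the level-$j$ fiber with $F_j$.
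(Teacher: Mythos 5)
Your proposal is correct and follows essentially the same route as the paper: both build a tower of right Kan extensions from the truncations of $X^\bullet$ (which commutes with $\Omega^k$ because Ran is a pointwise limit), identify the resulting fiber over $\mathrm{Tot}_n$ with the desired tower, and then invoke the $k$-fold deloopability of $\Delta^{\leq n}\subset\Delta^{\leq m}$ in the diagram category of towers. The only difference is bookkeeping — the paper forms the fibers of the Ran-tower first (obtaining cosimplicial objects that vanish on $\Delta^{\leq n}$ and applying \Cref{kfolddeloopable}), whereas you take the fiber after applying $\mathrm{Tot}$ using the $\mathcal{A}$-valued corollary of \Cref{sec:deloop} — and these are interchangeable.
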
 

\newcommand{\ran}{\mathrm{Ran}}
Each of the objects in question is naturally a functor of $\Omega^k X^\bullet$
by \Cref{kdeloopable}; the additional claim here is that the \emph{maps} can be made into $k$-fold
loop maps. 
\begin{proof} 
We consider the sequence of objects in $\fun( \Delta, \mathcal{C})$,
\[ \ran_{\Delta^{\leq m} \to \Delta}(X^\bullet) \to
\ran_{\Delta^{\leq m-1} \to \Delta}(X^\bullet)
\dots \to 
\ran_{\Delta^{\leq n} \to \Delta}(X^\bullet).
\]
Form the fibers of all of these over the last one to get a sequence of pointed
cosimplicial spaces
\[ \left\{\mathrm{fib}\left(\ran_{\Delta^{\leq i} \to \Delta}(X^\bullet) \to
\ran_{\Delta^{\leq n} \to \Delta}(X^\bullet)\right) \right\}_{n \leq i \leq m} 
.\]
This is a sequence of pointed cosimplicial spaces, each of which restricts to $\ast$ on
$\Delta^{\leq n}$. When we take $\mathrm{Tot}_m$ of this sequence, the result
is therefore a functor of $\Omega^k X^\bullet$ by \Cref{kfolddeloopable} and
\Cref{kdeloopable}. 
But when we take $\mathrm{Tot}_m$ of this sequence, we obtain precisely the
sequence of pointed spaces in the statement of the corollary. 
\end{proof}

\section{Applications and extensions}

\newcommand{\sub}{\mathrm{Sub}}

\subsection{Open covers}
We start with an application to the most classical example of a homotopy
colimit, which was stated in the introduction (\Cref{paracompact1}).  

\begin{theorem} \label{desuspension}
Let $X$ be a pointed topological space covered by $n$ open sets $U_1$, $U_2$,
$\dots$, $U_n$ $\subset X$, each containing the basepoint. Suppose the intersection
of any collection of at most $r$ of the $\left\{U_i\right\}$ is
weakly contractible.
Then $X$ has the weak homotopy type of a $(2r - n  + 1)$-fold suspension. 
\end{theorem}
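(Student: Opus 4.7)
The plan is to realize $X$ as a pointed homotopy colimit indexed by the opposite of the poset of nonempty subsets of $S = \{1,\dots,n\}$ and then invoke \Cref{powsetdeloop} in the opposite $\infty$-category $\mathcal{S}_*^{\mathrm{op}}$. Let $F\colon \mathcal{P}(S)^{\mathrm{op}} \to \mathcal{S}_*$ be the functor $I \mapsto U_I := \bigcap_{i\in I} U_i$, pointed by the common basepoint of the $U_i$. By the standard \v{C}ech/Mayer--Vietoris recognition for open covers, $X$ is weakly equivalent to the (unpointed) homotopy colimit of $F$; and since $\mathcal{P}(S)$ has a terminal object, and hence contractible nerve, the pointed and unpointed homotopy colimits of $F$ coincide up to weak equivalence, both computing $X$.

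The contractibility hypothesis---that the intersection of any at most $r$ of the $U_i$ is weakly contractible---is exactly the statement that $F|_{\mathcal{P}_{\leq r}(S)^{\mathrm{op}}}$ is pointwise contractible. In pointed spaces, the homotopy colimit of a pointwise $\simeq *$ diagram is itself $\simeq *$, so the natural map
\[
\mathrm{hocolim}_{\mathcal{P}_{\leq r}(S)^{\mathrm{op}}} F \longrightarrow \mathrm{hocolim}_{\mathcal{P}(S)^{\mathrm{op}}} F
\]
identifies, up to weak equivalence, with the inclusion $* \to X$, whose cofiber in $\mathcal{S}_*$ is just $X$.

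Now apply the final corollary of \Cref{sec:deloop} with $\mathcal{A} = \mathcal{S}_*^{\mathrm{op}}$. In $\mathcal{A}$, limits over $\mathcal{P}(S)$ and $\mathcal{P}_{\leq r}(S)$ become precisely the pointed homotopy colimits in $\mathcal{S}_*$ over the opposite indexing categories; the fiber in $\mathcal{A}$ of the restriction map becomes the corresponding cofiber in $\mathcal{S}_*$; and $\Omega^k$ in $\mathcal{A}$ is $\Sigma^k$ in $\mathcal{S}_*$. Combined with \Cref{powsetdeloop}, which asserts that $\mathcal{P}_{\leq r}(S) \subset \mathcal{P}(S)$ is $(2r - n + 1)$-fold deloopable, the corollary shows that the cofiber under discussion---that is, $X$ itself---is a $(2r - n + 1)$-fold suspension in $\mathcal{S}_*$, as claimed.

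The main care required is purely bookkeeping: one must track how limits, fibers, and $\Omega$ convert to colimits, cofibers, and $\Sigma$ under the passage to $\mathcal{S}_*^{\mathrm{op}}$, and observe that only in the \emph{pointed} setting does the pointwise contractibility of $F|_{\mathcal{P}_{\leq r}(S)^{\mathrm{op}}}$ trivialize the restricted homotopy colimit (unpointedly one would instead recover $|\mathcal{P}_{\leq r}(S)|$, a wedge of $(r-1)$-spheres). No new desuspension work is required beyond what \Cref{powsetdeloop} already supplies.
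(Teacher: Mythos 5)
Your proposal is correct and follows essentially the same route as the paper: realize $X$ via the Dugger--Isaksen theorem as the homotopy colimit of the intersection diagram over $\mathcal{P}(\{1,\dots,n\})^{\mathrm{op}}$, observe that the hypothesis makes the functor pointwise weakly contractible on subsets of cardinality $\leq r$, and apply \Cref{powsetdeloop} in the opposite category $\mathcal{S}_*^{\mathrm{op}}$, where $\Omega^k$ becomes $\Sigma^k$. The extra bookkeeping you supply (pointed versus unpointed colimits over the contractible poset, and identifying the relevant cofiber with $X$ itself) is implicit in the paper's terser three-sentence argument.
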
 
\begin{proof} 
For a space $X$ covered by open sets $U_1, \dots, U_n$ as above, there is a weak equivalence \cite{DuggerIsaksen}
between $X$ and the homotopy colimit over the poset $\mathcal{P}(\left\{1, 2, \dots, n\right\})$ 
of the functor that sends a nonempty subset $S \subset \left\{1, 2, \dots, n\right\}$ to
$\bigcap_{i \in S} U_i$.
This is a \emph{contravariant} functor from the poset
to $\mathcal{S}_*$, and by assumption, it takes weakly contractible values on the
subsets of $\left\{1, 2, \dots, n\right\}$ of cardinality $\leq r$. Using
\Cref{powsetdeloop}, we conclude that the homotopy colimit has the weak
homotopy type of a
$(2r - n + 1)$-fold suspension.
\end{proof} 

Suppose we are in the \emph{unpointed} setting, and we have an open cover
$U_1, \dots, U_n$ of a space $X$ such that all the intersections of
the $\left\{U_i\right\}$ are either
{empty} or {weakly contractible.} In this case, $X$ has the weak homotopy type of
the \emph{nerve}
of the cover, which has dimension $\leq n-1$ as a simplicial set. 
If the dimension is $n-1$, then $U_1 \cap \dots \cap U_n \neq \emptyset$ and is
thus weakly contractible, so that $X$ is itself weakly contractible. Otherwise, the dimension
is $\leq n-2$. We consider only this case. 

Suppose the intersection of any $r$ of the $\left\{U_i\right\}$ is nonempty. 
Then, $X$ is $(r-2)$-connected. 
But any complex with cells in dimensions $[s, t]$ can be desuspended (by the
Freudenthal suspension theorem) if $t \leq 2s - 1$. 
Here, we find that if $n-2 \leq 2(r-1) - 1$, or $n \leq 2r-1$, then, up to
weak equivalence, $X$ can be
desuspended (after choosing a basepoint). This is a version of the conclusion of \Cref{desuspension} for a
one-fold desuspension in an unpointed setting. 

\subsection{Other posets}
In \Cref{powsetdeloop}, we showed that if $\mathcal{P}(S)$ is the poset of
nonempty subsets of a given finite set $S$ and $\mathcal{P}_{\leq r}(S) \subset
\mathcal{P}(S)$ is the poset of nonempty subsets of cardinality $\leq r$, then
the inclusion $\mathcal{P}_{\leq r}(S) \subset \mathcal{P}(S)$ is $(2r - |S| +
1)$-fold deloopable. We note here that other similar examples of deloopable poset
inclusions occur ``in nature.'' The study of homotopy types of posets and
homotopy colimits of diagrams of spaces indexed over posets has numerous
applications: see for instance \cite{posetfiber, quillenp}. 

Let $k$ be a field, and let $V$ be an $n$-dimensional vector space over $k$.
Consider the poset $\sub(V)$
of nonzero subspaces of $V$. 
Given $2 \leq r \leq n$, let $\sub_{\leq r}(V)$ be the poset of nonzero subspaces of $V$
which have dimension $\leq r$. 

\begin{proposition} 
The inclusion of posets $\sub_{\leq r}(V) \subset \sub(V)$ is $(2r - n +
1)$-fold deloopable.
\end{proposition}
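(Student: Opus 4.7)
The plan is to imitate the proof of \Cref{powsetdeloop} essentially verbatim, with the Boolean lattice of nonempty subsets replaced by the lattice of nonzero linear subspaces. By the same formula used there, the left Kan extension $\mathrm{Lan}_{\sub_{\leq r}(V) \to \sub(V)}(\ast)$ sends a subspace $W \in \sub(V)$ to the nerve of $\sub_{\leq r}(W)$, the poset of nonzero subspaces of $W$ of dimension at most $r$. When $\dim W \leq r$, this poset has $W$ as a maximum element and is therefore contractible, so that $T(W)$ is weakly contractible for such $W$, as it must be.

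The key geometric claim, playing the role of the wedge-of-spheres computation in \Cref{powsetdeloop}, is that for $\dim W > r$ the nerve of $\sub_{\leq r}(W)$ has the homotopy type of a wedge of $(r-1)$-spheres. This is the linear analogue of the corresponding subset statement and follows from the Solomon-Tits theorem combined with standard rank-selection results of Bj\"orner and Wachs. Specifically, the proper part of $\sub(W)$ is shellable with the order complex a wedge of $(\dim W - 2)$-spheres, and the rank selection obtained by keeping only the ranks $1, 2, \dots, r$ is again shellable of pure dimension $r-1$, hence a wedge of $(r-1)$-spheres. Granted this, the functor $T = T_{\sub_{\leq r}(V), \sub(V)}\colon \sub(V) \to \mathcal{S}_*$, being the pointwise unreduced suspension of $\mathrm{Lan}(\ast)$, takes values in finite wedges of $r$-spheres.

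From here one proceeds exactly as in the proof of \Cref{powsetdeloop}. By \Cref{subposet} it suffices to desuspend $T|_{\sub_{>r}(V)}$, and by \Cref{dimcat} the nerve of $\sub_{>r}(V)$ has dimension $n - r - 1$, since any chain of subspaces whose dimensions are all strictly greater than $r$ has the form $W_{r+1} \subsetneq W_{r+2} \subsetneq \cdots \subsetneq W_n$ with $\dim W_i = i$. Viewing $T|_{\sub_{>r}(V)}$ as valued in the full subcategory $\mathcal{C}(r) \subset \mathcal{S}_*$ of finite wedges of $r$-spheres, and using the Freudenthal suspension theorem to conclude that $\Sigma^d\colon \mathcal{C}(r-d) \to \mathcal{C}(r)$ is $(r-d)$-connected in the sense of \Cref{mconnecteddef}, \Cref{lifting} furnishes the desired $d$-fold desuspension of $T$ whenever $n - r - 1 \leq r - d$, equivalently $d \leq 2r - n + 1$. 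The only genuine content beyond the subset case is the homotopy-type computation for $\sub_{\leq r}(W)$; everything else is a mechanical reprise of the earlier argument.
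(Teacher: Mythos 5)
Your proof is correct and follows the same overall scaffolding as the paper's (which is forced: the proposition is engineered to be a second run of the machinery of \Cref{powsetdeloop}, and both you and the paper reduce everything to the single claim that the nerve of $\sub_{\leq r}(W)$ is a wedge of $(r-1)$-spheres for $\dim W > r$, after which the desuspension via \Cref{subposet}, \Cref{dimcat}, Freudenthal, and \Cref{lifting} is mechanical). Where you genuinely diverge is in how you establish that claim. You invoke the Solomon--Tits theorem together with the Bj\"orner--Wachs rank-selection theorem: the proper part of the (geometric) subspace lattice is shellable, rank selection to ranks $1,\dots,r$ preserves shellability, and a pure shellable $(r-1)$-dimensional complex is a wedge of $(r-1)$-spheres. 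The paper instead reproduces Folkman's argument as presented in Quillen's paper: cover $\sub_{\leq r}(W)$ by the subposets $\sub_{\leq r,L}(W)$ of subspaces containing a fixed line $L$, observe that any $r$-fold intersection of these has a minimal element and is hence contractible, deduce that $\sub_{\leq r}(W)$ is $(r-2)$-connected, and combine this with the dimension bound $\leq r-1$ from \Cref{dimcat}. Both routes are standard and both are sound; the paper's is more elementary and self-contained (a nerve-of-a-cover connectivity estimate plus a dimension count, using only tools already present in the paper), while yours imports heavier poset-topology machinery but makes the structural reason for the wedge-of-spheres conclusion (Cohen--Macaulayness of rank selections of geometric lattices) more transparent and more readily generalizable to other Cohen--Macaulay posets. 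One small caveat applying equally to your write-up and the paper's: over an infinite field the wedges in question are infinite, so the subcategory $\mathcal{C}(p)$ of \emph{finite} wedges of $p$-spheres should be enlarged to allow arbitrary wedges; the Freudenthal connectivity estimate for the suspension map survives this enlargement, so nothing breaks.
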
 
\begin{proof} 
Let $W \subset V$ be any subspace. We show that the nerve of $\sub_{\leq r}(W)$
(i.e., the left Kan extension of a point from $\sub_{\leq r}(V)$, evaluated on $W$)  has the homotopy
type of a wedge of $(r-1)$-spheres, following an argument of Folkman presented
in \cite[Proposition 8.6]{quillenp}: for each one-dimensional space $L \subset
W$, we
have a subposet 
$\sub_{ \leq r, L}(W)$ consisting of those $\leq r$-dimensional subspaces of $W$ that
contain $L$. The intersection of any $r$ of the subposets $\sub_{\leq r, L}(W)$
contains an initial object (i.e., minimal element) and is thus
weakly contractible. 
Therefore, $\sub_{\leq r}(W)$ is $(r-2)$-connected. 
However, the nerve of $\sub_{\leq r}(W)$ has dimension $\leq r-1$  by
\Cref{dimcat}. 
Putting 
these together, it follows that the nerve of $\sub_{\leq r}(W)$ has the
homotopy type of a wedge of $(r-1)$-spheres. 

Now it follows that $T_{\sub_r(V) \to \sub(V)}$ is pointwise a wedge of
$r$-spheres, and it is weakly contractible when evaluated on any subspace of
dimension $\leq r$. Since the poset $\sub(V) \setminus \sub_r(V)$ has dimension 
$n - r - 1$, the same argument as in \Cref{powsetdeloop} goes through. 
\end{proof}

\subsection{Differentials}

Let $X^\bullet$ be a pointed cosimplicial space, and consider the homotopy
spectral sequence
\[ E_2^{s,t} \implies \pi_{t-s} \mathrm{Tot} X^\bullet,  \]
which comes from the $\mathrm{Tot}$ tower, and the differentials $d_r\colon
E_r^{s,t} \to E_r^{s+r, t+r-1}$.

If $t-s>0$, the entire spectral sequence is a
shift of the 
spectral sequence for $\Omega X^\bullet$. 
All the differentials mapping out of $E_r^{s,t}$ are determined by $\Omega
X^\bullet$ if $t-s>0$. 
But there are also differentials
$d_r\colon E_r^{s,s} \to E_r^{s+r, s+r-1}$ which measure the obstructions to finding
points in $\pi_0 \mathrm{Tot} X^\bullet$. 
These cannot be determined by $\Omega X^\bullet$, as they contribute to $\pi_0$. 
Nonetheless, the $E_2^{s,s}$ terms for $s > 0$ are determined by $\Omega
X^\bullet$. 

\begin{theorem} 
Given any cosimplicial space $X^\bullet$, 
any differential $d_r\colon E_r^{s,s} \to E_r^{s + r, s + r-1}$ depends only on the
cosimplicial space $\Omega X^\bullet$ for 
$r \leq s-1$. 
\end{theorem}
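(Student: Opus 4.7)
The plan is to reduce the statement to \Cref{strongdeloop}. Recall that the differentials of the Bousfield--Kan homotopy spectral sequence arise from the exact couple associated to the Tot tower; by construction, the value of $d_r$ on a class in $E_r^{s,s}$ depends only on the finite segment of the tower from $\Tot_{s-1}(X^\bullet)$ up through $\Tot_{s+r}(X^\bullet)$. The first step is to make this locality precise: a fringe class $x \in E_r^{s,s}$ is represented by a point of $\pi_0\,\mathrm{fib}(\Tot_s X^\bullet \to \Tot_{s-1} X^\bullet)$ whose chosen lift survives through $\Tot_{s+r-1}(X^\bullet)$, and $d_r(x)$ measures the obstruction to lifting one further step to $\Tot_{s+r}(X^\bullet)$, all measured relative to the fixed basepoint at level $s-1$. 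All of this data is therefore determined by the tower of pointed spaces
\[
\mathrm{fib}(\Tot_{s+r} X^\bullet \to \Tot_{s-1} X^\bullet) \to \mathrm{fib}(\Tot_{s+r-1} X^\bullet \to \Tot_{s-1} X^\bullet) \to \cdots \to \mathrm{fib}(\Tot_{s-1} X^\bullet \to \Tot_{s-1} X^\bullet) = \ast,
\]
with its natural sequence of maps in $\mathcal{S}_*$.

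Next, I would apply \Cref{strongdeloop} with $n = s-1$ and $m = s+r$. The hypothesis $m \leq 2n+1$ becomes $s + r \leq 2s - 1$, i.e., $r \leq s-1$, which is precisely our assumption; the corresponding delooping index is $k = 2n - m + 2 = s - r \geq 1$. The corollary then produces a factorization showing that the entire tower of fibers above, viewed as a functor of $X^\bullet \in \mathrm{Fun}(\Delta, \mathcal{S}_*)$, factors through $\Omega^{s-r}$, and in particular through $\Omega$. Combined with the locality statement of the first step, this forces $d_r$ itself to be a functor of $\Omega X^\bullet$.

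The main obstacle is the first step, namely justifying rigorously that $d_r$ on the fringe is computed solely from the displayed finite tower of fibers. The subtlety is that the target $E_r^{s+r,s+r-1}$ lies one slot below the fringe line $t=s$, so it is classically an obstruction set, not a homotopy group, and the non-abelian fringe bookkeeping of \cite{BousfieldKan} has to be invoked. Concretely, I would express both the source and target of $d_r$ as (relative) $\pi_0$'s of terms in the above fiber tower, using only standard long exact/fiber sequences and the convention that turns $\pi_{-1}$ of a layer into an obstruction set relative to the base. Once this is written down naturally in $X^\bullet$, functoriality propagates through the factorization by $\Omega^{s-r}$ supplied by \Cref{strongdeloop} and yields the theorem.
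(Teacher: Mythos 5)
Your proposal is correct and follows essentially the same route as the paper: the paper likewise identifies $d_r$ on the fringe line with the obstruction to lifting a point of $\mathrm{fib}(\mathrm{Tot}_s(X^\bullet) \to \mathrm{Tot}_{s-1}(X^\bullet))$ through the tower of fibers over $\mathrm{Tot}_{s-1}(X^\bullet)$ up to $\mathrm{Tot}_{s+r}(X^\bullet)$, and then invokes \Cref{strongdeloop} with the same numerology ($2(s-1)-(s+r)+2 = s-r \geq 1$, i.e., $r \leq s-1$). The only difference is that you explicitly flag the fringe-bookkeeping step as requiring justification, whereas the paper asserts the identification of the differentials with this lifting obstruction without further comment.
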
 
\begin{proof} 
The differentials $d_k, k \leq r$ starting in filtration $s$ are precisely the
obstructions to lifting an element in $\mathrm{fib}(
\mathrm{Tot}_{s}(X^\bullet) \to
\mathrm{Tot}_{s-1}(X^\bullet))$ to $\mathrm{fib}(
\mathrm{Tot}_{r+s}(X^\bullet) \to
\mathrm{Tot}_{s-1}(X^\bullet))$. 
In particular, if 
\[ 2(s-1) - (r + s ) + 2 \geq 1,  \]
then, by \Cref{strongdeloop}, this obstruction problem is determined entirely by $\Omega X^\bullet$. 
\end{proof}

\begin{corollary} \label{cor:Compare}
Suppose $X^\bullet$ is a pointed cosimplicial space with the properties: 
\begin{enumerate}
\item Any element in $\pi_0 \mathrm{Tot} X^\bullet$ has filtration $\leq N$.  
\item Any class in $E_2^{s,s}$ for $s > N$ is either killed at a finite
stage or supports a differential
$d_r$ for $r \leq s-1$. 
\end{enumerate}
Then if $Y^\bullet$ is a path-connected cosimplicial space such that there exists an
equivalence of pointed cosimplicial spaces $\Omega X^\bullet \simeq \Omega
Y^\bullet$, the same conditions above are valid for the totalization of
$Y^\bullet$. 
\end{corollary}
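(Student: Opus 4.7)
The strategy is to show that both conditions~(1) and~(2) in the hypothesis are in fact invariants of the pointed cosimplicial space $\Omega X^\bullet$, so that the hypothesized equivalence $\Omega X^\bullet \simeq \Omega Y^\bullet$ transfers them to $Y^\bullet$.

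First I would verify that each $E_2^{s,s}$-term with $s \geq 1$, and every differential that condition~(2) examines, depend only on $\Omega X^\bullet$. The identity $E_2^{s,t} = \pi^s \pi_t X^\bullet = \pi^s \pi_{t-1} \Omega X^\bullet$ for $t \geq 1$ yields $E_2^{s,s}(X^\bullet) = E_2^{s,s}(Y^\bullet)$ for every $s \geq 1$. Moreover, the portion of the spectral sequence with $t > s$ is just a shift of the spectral sequence of $\Omega X^\bullet$, so all differentials whose source and target lie in this off-diagonal region are already determined by $\Omega X^\bullet$; in particular the incoming differentials $d_r\colon E_r^{s-r,\, s-r+1} \to E_r^{s,s}$ (whose source sits at $t = s-r+1 \geq 1$) correspond under this shift identification to outgoing fringe differentials in the spectral sequence of $\Omega X^\bullet$. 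On the other hand, the preceding theorem shows that outgoing fringe differentials $d_r\colon E_r^{s,s} \to E_r^{s+r,\, s+r-1}$ are determined by $\Omega X^\bullet$ precisely in the range $r \leq s-1$.

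Combining these two observations, both alternatives appearing in condition~(2)---being ``killed at a finite stage'' (by some incoming differential) and ``supporting a $d_r$ for $r \leq s-1$'' (an outgoing fringe differential)---are invariants of $\Omega X^\bullet$. Hence condition~(2) holds verbatim for $Y^\bullet$. With condition~(2) in hand for $Y^\bullet$, every class in $E_2^{s,s}(Y^\bullet)$ for $s > N$ must die at some finite page, so $E_\infty^{s,s}(Y^\bullet) = 0$ for $s > N$. The standard Bousfield--Kan convergence statement for path-connected cosimplicial spaces then identifies the associated graded of the natural filtration on $\pi_0\, \mathrm{Tot}\, Y^\bullet$ with these $E_\infty^{s,s}(Y^\bullet)$, forcing that filtration to be concentrated in degrees $s \leq N$. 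This gives condition~(1) for $Y^\bullet$.

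The hard part will be justifying that the incoming differentials $d_r\colon E_r^{s-r,\, s-r+1}(X^\bullet) \to E_r^{s,s}(X^\bullet)$ really do correspond to intrinsic data of $\Omega X^\bullet$: the ``$\Omega$-shift'' identification is cleanest on the strictly off-diagonal region $t > s$, whereas these maps land on the diagonal, so one must trace endpoints carefully to see that they coincide with outgoing fringe differentials of $\Omega X^\bullet$. Once that identification is in place, the rest of the argument is essentially bookkeeping in the homotopy spectral sequence together with an invocation of \Cref{strongdeloop} (as used in the preceding theorem) for the $r \leq s-1$ range.
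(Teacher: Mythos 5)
Your proof is correct and takes essentially the same route as the paper's: both arguments reduce condition (2) to the observation that the diagonal $E_2^{s,s}$-terms and the incoming (non-fringed) differentials landing on the diagonal are read off from the spectral sequence of $\Omega X^\bullet$ via the shift identification, while the outgoing fringe differentials $d_r$ with $r \leq s-1$ are controlled by the preceding theorem (ultimately \Cref{strongdeloop}). The paper's version is terser and leaves condition (1) implicit; your added convergence step deducing (1) from (2) is a reasonable filling-in of that gap rather than a different approach.
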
 
\begin{proof} 
This follows from the above theorem. Concretely, any element 
of the fiber $\mathrm{fib}\left(\mathrm{Tot}_s(X^\bullet) \to
\mathrm{Tot}_{s-1}(X^\bullet)\right)$ either maps to the connected component
at $\ast$ in 
$\mathrm{fib}\left(\mathrm{Tot}_s(X^\bullet) \to
\mathrm{Tot}_{s-t-1}(X^\bullet)\right)$ for some $t>0$, which is detected by a
differential that we see from the spectral sequence of $\Omega X^\bullet$
(i.e., not a ``fringed'' differential in the spectral sequence for $X^\bullet$), or it supports a
``fringed'' differential and fails to lift to $\mathrm{fib}(
\mathrm{Tot}_{s+r}(X^\bullet) \to \mathrm{Tot}_{s-1}(X^\bullet))$, where $r
\leq s-1$. But in this
range, everything only depends only on $\Omega X^\bullet$ by
\Cref{strongdeloop}. 
\end{proof}

A natural example of the type of situation in \Cref{cor:Compare} arises from a
descendable map of $\e{\infty}$-ring spectra $S\to R$. 
Denote by $R^\bullet$ the cosimplicial $\e{\infty}$-ring spectrum which is the cobar construction
on the map $S\to R$, also known as the Amitsur complex. Descendability
implies that the spectrum $S$ is equivalent to the totalization of $R^\bullet$. Under mild conditions, the spaces $B(\Omega^\infty R^\bullet)$ and $BGL_1(R^\bullet)$ satisfy the hypotheses of \Cref{cor:Compare}, since for an $\e{1}$-ring spectrum $R$, the space $GL_1(R) \simeq \Omega BGL_1(R)$ is a union of connected components of $\Omega B( \Omega^\infty R)$. We discuss this example further in \cite{MS}: it was the motivating example and application for us.

\bibliographystyle{amsalpha}
\bibliography{biblio}

\end{document}